\newtheorem{lemma}{Lemma}[section]
\newtheorem{corollary}{Corollary}
\newtheorem{theorem}{Theorem}[section]
\newtheorem{proposition}{Proposition}[section]
\newtheorem{remark}{Remark}[section]
\newtheorem{definition}{Definition}[section]
\def\R{\mathbb{R}}
\def\N{\mathbb{N}}
\def\x{\textbf{x}}
\def\ep{\epsilon}
\def\calW{\mathcal{W}}
\def\calY{\mathcal{Y}}
\def\calV{\mathcal{V}}
\def\ra{\rightarrow}
\newcommand\numberthis{\addtocounter{equation}{1}\tag{\theequation}}
\def\gamra{\stackrel{\Gamma}{\longrightarrow}}
\def\sqra{\stackrel{\square}{\longrightarrow}}
\def\weakstarra{\stackrel{\star}{\rightharpoonup}}
\def\GL{\operatorname{GL}}
\def\TV{\operatorname{TV}}
\numberwithin{equation}{section}
\title[]{Ginzburg--Landau Functionals in the Large-Graph Limit}
\author[E. J. Zhang]{Edith J. Zhang}
\author[J. Scott]{James Scott}
\author[Q. Du]{Qiang Du}
\author[M. A. Porter]{Mason A. Porter}
\address[E. J. Zhang]{Columbia University in the City of New York, NY, 10027, United States of America\footnote{Now affiliated with Department of Mathematics, University of California,
Los Angeles, CA, 90095, United States of America}}
\email{{\tt edith@math.ucla.edu}}
\address[J. M. Scott]{Department of Mathematics and Statistics, Auburn University, Auburn, Alabama, 36849, United States of America}
\email{{\tt james.m.scott@auburn.edu}}
\address[Q. Du]{Columbia University in the City of New York, NY, 10027, United States of America}
\email{{\tt qd2125@columbia.edu}}
\address[M. A. Porter]{Department of Mathematics, University of California,
Los Angeles, CA, 90095, United States of America}
\address[M. A. Porter]{Department of Sociology, University of California,
Los Angeles, CA, 90095, United States of America}
\address[M. A. Porter]{Santa Fe Institute, Santa Fe, NM, 87501, United States of America}
\email{{\tt mason@math.ucla.edu}}
\keywords{Graphon, Graph limit, Cut norm, Ginzburg--Landau functional, Allen--Cahn functional, {$\Gamma$-convergence}, {Young measure}, Stochastic block model}
\subjclass[2010]{05C63, 49J45, 82B24, 47J30}
\begin{document}

\begin{abstract}
Ginzburg--Landau (GL) functionals on graphs, which are relaxations of graph-cut functionals on graphs, have yielded a variety of insights in image segmentation and graph clustering. In this paper, we study large-graph limits of GL functionals by taking a functional-analytic view of graphs as nonlocal kernels. For a graph $W_n$ with $n$ nodes, the corresponding graph GL functional $\GL^{W_n}_\ep$ is an energy for functions on $W_n$. We minimize GL functionals on sequences of growing graphs that converge to functions called graphons. For such sequences of graphs, we show that the graph GL functional $\Gamma$-converges to a continuous and nonlocal functional that we call the \emph{graphon GL functional}. We also investigate the sharp-interface limits of the graph GL and graphon GL functionals, and we relate these limits to a nonlocal total-variation (TV) functional. We express the limiting GL functional in terms of Young measures and thereby obtain a probabilistic interpretation of the variational problem in the large-graph limit. Finally, to develop intuition about the graphon GL functional, we determine the GL minimizer for several example families of graphons.
\end{abstract}

\maketitle


\section{Introduction}

The study of large graphs has become increasingly common in network analysis, with applications {in sociology}, systems biology, communications, {epidemiology}, and many other areas \cite{sinews-graphons}. Because of their nice mathematical properties, graphons have become popular in both applications~\cite{medvedev2014b,parise2019graphon, ruiz2021graphon, gao2019graphon, vizuete2020graphon, wolfe2013nonparametric} and theoretical studies~\cite{borgs2016sparse, borgs2008convergent,braides2020cut,diaconis2007graph,lovasz2012large}. In particular, graphons are well-suited to variational problems on graphs because it is possible to continuously deform a graphon and thereby calculate variations of graphon functionals~\cite[Section 16.2]{lovasz2012large}. Graphons also connect graph functionals to nonlocal functionals~\cite{lou2010image, zhang2010wavelet}. For instance, one can view a graphon total-variation (TV) functional (which we will formulate in the present paper) as a nonlocal perimeter functional~\cite{elbouchairi2023nonlocal, caffarelli2009nonlocal, gilboa2009nonlocal}. Graphons {also have} been employed in the derivation of mathematically rigorous mean-field approximations of dynamical systems on graphs \cite{medvedev2014a,medvedev2014b}.

Optimization problems on graphs involve minimizing an energy functional that is defined on functions on graphs (i.e., functions that assign a value to each node of a graph). Optimization problems on large graphs are enormous combinatorial optimization problems, and they often are intractable computationally. One example is the minimum-cut (i.e., ``min-cut") problem, which arises in applications such as community detection~\cite{newman2018,van2013community} and image segmentation~\cite{rudin1992nonlinear, wu1993optimal, yi2012image, shi2000normalized,szlam2010total} and is related to the maximum-flow (i.e., ``max-flow") problem on networks~\cite{dantzig2003max,yuan2010study, ford1956maximal}.  

The classical min-cut problem entails partitioning the set of nodes of a graph into two subsets, $S$ and $S^c$, while minimizing the number of edges that one ``cuts" to separate $S$ and $S^c$. {The min-cut problem involves minimizing a \textit{graph-cut functional}}, which is equivalent to a graph TV functional~\cite{merkurjev2015global} {(see Remark \ref{rmk: graph-cut equivalence})}. 
As the size of available data increases, increasingly large graphs (with millions of nodes or more) occur in applications. Analyzing large graphs is computationally expensive. For an $n$-node graph (i.e., a graph of ``size" $n$), the min-cut problem involves optimizing over $2^n$ possible indicator functions, which each correspond to a possible partition $\{S, S^c\}$. This computational cost is a major obstacle in many applications. 

One {approach} to simplify computations in the min-cut problem is to use the graph Ginzburg--Landau (GL) functional. The GL functional is a relaxation of a graph-cut functional that is defined on $[-1,1]$-valued functions instead of on $\{-1,1\}$-valued functions. The $\Gamma$-convergence of the graph GL functional to the graph TV functional (equivalently, to the graph-cut functional), which was proved in \cite{van2012gamma}, justifies the use of the graph GL functional as a relaxation of the graph TV (i.e., graph-cut) functional. However, although the graph GL functional is easier to minimize than the graph-cut functional, the minimization process still relies on approximate algorithms~\cite{budd2019mbo, merkurjev2013mbo, van2014mean, bertozzi2016diffuse, shen2023preconditioned}. 

In the present paper, we further relax the graph-GL minimization problem to the continuum using a large-graph limit. That is, we evaluate the limiting minimization problem on functions {that map} $(0,1) \ra [-1,1]$ rather than on functions {that map} $\{1,\ldots, n\} \ra [-1,1]$. We use the idea of a \textit{graphon}~\cite{lovasz2012large}, which generalizes a graph's adjacency matrix (which is a linear operator that acts on vectors in $\R^n$) to a linear operator that acts on functions in $L^q(0,1)$. This viewpoint allows us to treat problems that involve large graphs as functional-analytic problems. 

Lovasz's original formulation of graphons~\cite{lovasz2012large} defined them as functions in $L^\infty((0,1)^2)$ that arise as limits of \textit{dense sequences of graphs}\footnote{Density is a property of a sequence of graphs, rather than a property of a graph itself, because the definition of density is based on the rate at which the number of edges increases in comparison to the number of nodes.} (i.e., sequences of graphs whose number of edges scales as $\Theta(n^2)$ in the number $n$ of nodes, which entails that the number of edges is bounded above by $c_1 n^2$ and {bounded} below by $c_2 n^2$ for some {positive constants $c_1$ and $c_2$}). Sequences of growing graphs in many applications and real-world situations are sparse~\cite{newman2018}, but applications of $L^\infty$ graphons typically involve only mean-field models \cite{bayraktar2023graphon, caines2021graphon}. To tackle this issue, researchers have defined \emph{$L^p$ graphons}, which arise as limits of sparse sequences of graphs~\cite{bollobas2007metrics,borgs2018LpII} (i.e., sequences of graphs whose number of edges increases as $o(n^2)$, which entails that the number of edges grows at a rate that is strictly less than $n^2$). One obtains $L^p$ graphons, which are associated with operators on $L^q((0,1)^2)$, by normalizing graphs by their edge density. Researchers have {also used traditional} $L^\infty$ graphons as limit objects of ``very sparse" sequences of graphs~\cite{bollobas2007metrics, bollobas2011sparse} (i.e., sequences of graphs with bounded degree or bounded mean degree as $n \ra \infty$). See Section \ref{sec: lp graphons} for further discussion of sparse graphons.

To consider the convergence of graph GL functionals in the graphon limit, we use a central tool in variational calculus that is known as $\Gamma$-convergence. {When} the domain of the underlying function space is compact, the $\Gamma$-convergence of a sequence $\{F_n\}$ of functionals $F_n$ to a limiting functional $F$ guarantees that the minimizers of $F_n$ converge to the minimizer of $F$~\cite{braides2006handbook}. 

The original GL theory is a physical model for phase transitions in superconductors~\cite{du1992analysis,cyrot1973ginzburg}. The GL functional, which is sometimes called the Allen--Cahn functional or the Modica--Mortola functional in some applications~\cite{allen1979microscopic,du2020phase,modica1977esempio}, is
\begin{equation}  \label{eq: classical GL}
    \GL_\ep (u) = \ep \int_\Omega \left|\nabla u^2\right| \,dx + \frac{1}{\ep} \int_\Omega\Phi(u(x)) \,dx \,, \quad u \in H^1(\Omega) \,,
\end{equation}
where $\Omega \in \R^d$ is a {bounded and connected set}, $\Phi$ is a double-well potential, and $H^1(\Omega) = W^{1,2}(\Omega) = \{f \in L^p(\Omega): \left(\|f\|_2^2 + \|f'\|_2^2\right)^{1/2} < \infty\}$ is a Sobolev space~\cite{evans2022partial}. As $\ep \ra 0$, the functional $\GL_\ep$ $\Gamma$-converges to the TV functional (i.e., perimeter function) \cite{bronsard1991motion}
\begin{equation}     \label{eq: classical TV}
    \TV(u) = \int_\Omega \left|\nabla u\right| \,dx \, .
\end{equation}

We take inspiration from van Gennip and Bertozzi~\cite{van2012gamma}, who defined the graph GL functional, which is a discrete version of $\GL_\ep$ for functions $u$ on graphs. See Section \ref{sec: graph functionals} for the definitions of the graph GL functional and other graph functionals. Van Gennip and Bertozzi proved $\Gamma$-convergence of the graph GL functional for a square-lattice graph. They derived both a large-graph (i.e., $n \ra \infty$) limit and a sharp-interface (i.e., $\ep \ra 0$) limit of this functional. As $n \ra \infty$, the square-lattice graph of grid size $1/n$ converges to the region $(0,1)^2 \subset \R^2$. 

A growing square-lattice graph is a specific sequence of growing graphs. One can view each square-lattice graph as a mesh approximation of the unit square. In the present paper, we generalize van Gennip and Bertozzi's results to general sequences of growing graphs that converge to a graphon as $n\ra \infty$. This convergence is guaranteed because the set of $L^p$ graphons is (under mild conditions) compact with respect to the cut metric \cite[Theorem 2.8]{borgs2019LpI}.

We refer to the limit of sequences of graph GL functionals as a \emph{graphon GL functional}. We show that the graphon GL functional $\Gamma$-converges to a nonlocal TV functional as $\ep \ra 0$.

\begin{figure}[H]
    \centering
        \begin{tikzpicture}[auto,
                       > = Stealth, 
           node distance = 22mm and 44mm,
              box/.style = {draw=gray, very thick,
                            minimum height=11mm, text width=22mm, 
                            align=center},
       every edge/.style = {draw, <->, very thick},
every edge quotes/.style = {font=\footnotesize, align=center, inner sep=1pt}]

    \node (n11) [box] {$\GL^{W_n}_\ep$~\eqref{eq: graph GL}};
    
    \node (n21) [box, xshift=4cm, yshift=-1cm, above=of n11] {$\TV^{W_n}$~\eqref{eq: graph TV}};
    
    \node (n31) [box, xshift=4cm, yshift=-5.7cm, above=of n21] {$\TV^W$~\eqref{eq: graphon TV}};
    
    \node (n32) [box, yshift=-1.2cm, below=of n21] {\large{$\GL^W_\ep$~\eqref{eq: graphon GL}}};
    
    \draw   [->] (n11) --node[midway,above left]{\large{$\stackrel{ \ep \,\ra\, 0}{\text{{\footnotesize limit (1)}}}$}}    (n21);
    
    \draw [->] (n21) --node[midway,above right]{\large{$\stackrel{n \,\ra\, \infty}{\text{{\footnotesize limit (2)}}}$}}     (n31);
    
    \draw [->] (n32) --node[midway,below right]{\large{$\stackrel{\ep \,\ra\, 0}{\text{{\footnotesize limit (4)}}}$ }}    (n31);
    
    \draw [->] (n11) --node[midway, below left]{\large{$\stackrel{n \,\ra\, \infty}{\text{{\footnotesize limit (3)}}}$}}  (n32);
    \end{tikzpicture}
    
    \caption{Four different $\Gamma$-convergences of the graph GL functional with different orderings of the $n \ra \infty$ and $\ep \ra 0$ limits. The arrows indicate $\Gamma$-convergence. For definitions of the functionals, see Section \ref{sec: notation}.
    } 
    \label{figure:convergences}
    \end{figure}
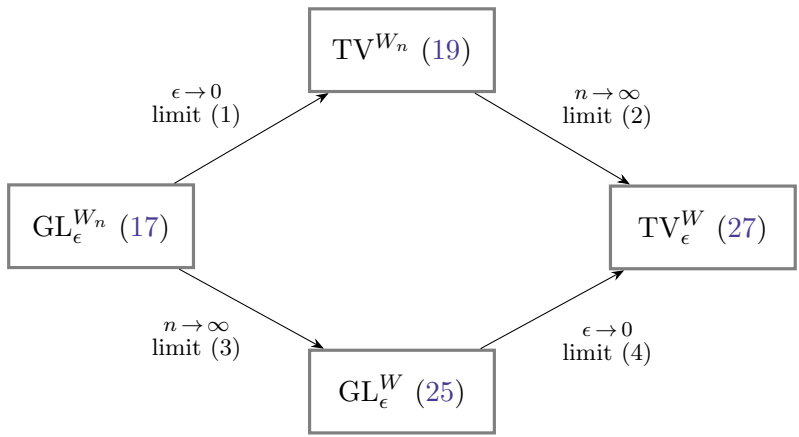

In Figure \ref{figure:convergences}, we illustrate two different sequential $\Gamma$-limits of the graph GL functional~\eqref{eq: graph GL}. One of the sequential limits is an $n \ra \infty$ and then $\ep \ra 0$ limit; the other is an $\ep \ra 0$ and then $n \ra \infty$ limit. In Sections \ref{sec: graph functionals} and \ref{sec: graphon functionals}, we define the associated limiting functionals. Limit (1) was proven in~\cite[Theorem 3.1]{van2012gamma}, and limit (2) follows from~\cite[Theorem 12]{braides2020cut}. To prove limit (3), we use an approach that is similar to the proof of \cite[Theorem 12]{braides2020cut}. We significantly generalize {\cite[Theorem 5.2]{van2012gamma}} in the sense that our graph limits {are valid for} general sequences of growing graphs, rather than only for a growing square-lattice graph. Limit (4) resembles the classical {Modica--Mortola} limit~\cite{modica1977esempio} (which states that $\GL_\ep \gamra {\TV}$), but it uses
graphon versions of $\GL$ and $\TV$ functionals. We prove limit (4) for {the graphons $W \in L^\infty((0,1)^2)$, which arise from dense sequences of graphs.} Limit (4) is not defined for more general $L^p$ graphons. 

Our results extend the study of graph GL {functionals} and {graph} TV functionals to graphon limits. We refer to these limiting functionals as the graphon GL {functionals} and {graphon} TV functionals, respectively. We show that the {graph GL-minimization and graph TV-minimization problems are consistent with the limiting graphon GL-minimization and graphon TV-minimization problems in the sense of $\Gamma$-convergence.} That is, the minimizers of the functionals converge to the minimizers of the limiting functionals. We also show that the classical limit $\GL_\ep \gamra {\TV}$ holds for graphons (i.e., $\GL^W_\ep \gamra {\TV^W}$). 

Our paper proceeds as follows. In Section \ref{sec: related}, we discuss a few key results from related work. In Section \ref{sec: graphons}, we review graphons and formalize the notion of a large-graph limit. In Section \ref{sec: notation}, we review the GL and TV functionals on graphs and graphons. We also collect some results about Young measures, discuss our function spaces, and define the relevant types of convergence. In Section \ref{sec: sequential ep then n}, we prove the sequential $\Gamma$-limits with $\ep \ra 0$ and then $n \ra \infty$. In Section \ref{sec: sequential n then ep part n}, we prove the sequential $\Gamma$-limits with $n \ra \infty$ and then $\ep \ra 0$. In Section \ref{sec: examples}, we {determine} GL minimizers for a few example families of graphons. Finally, in Section \ref{sec: conclusion}, we summarize our results and discuss future research directions.


\section{Related work}\label{sec: related}

Our work is inspired by van Gennip and Bertozzi~\cite{van2012gamma}, who examined the same four limits as in Figure \ref{figure:convergences} and also the simultaneous limit $\ep \ra 0$, $n \ra \infty$. They proved their $n \ra \infty$ limits specifically for
the square-lattice graph. We restate their version of limit (1), which holds for all graphs, and we extend their versions of the limits (2)--(4) to a nonlocal graph-limit scenario
in which the limiting GL and TV functionals are the graphon GL and TV functionals~\eqref{eq: graphon GL} and~\eqref{eq: graphon TV}, respectively. 
Limits (2)--(4) hold for general sequences of graphs. As we discuss in Section \ref{sec: graphon functionals}, the limiting graphon functionals are nonlocal.

We adapt ideas from Braides et al.~\cite{braides2020cut}, who used Young measures to show that the graph-cut functional (which we define later in \eqref{eq: graph-cut dirichlet form}) $\Gamma$-converges to the graphon-cut functional. We also use Young measures, which are generalizations of functions on graphs that assign 
a distribution of possible states (rather than a single fixed value) to each node of a graph. Young measures, which we define and discuss in Section \ref{sec: ym}, play a central role in our paper. Our methods and results are related more closely to the results of \cite{braides2020cut} than to those of~\cite{van2012gamma}. A key difference from \cite{braides2020cut} arises from the fact that
the domain of the cut functional {consists of} finite-range functions, whereas the domains of the graph and graphon GL functionals {consist of} continuous-range functions. 

{Garc{\'\i}a Trillos and Slep{\v{c}}ev}~\cite{trillos2013gamma} also studied a local limit and, analogously to limit (2), obtained $\Gamma$-convergence of the perimeter functional for the $n \ra \infty$ limit of point clouds. {The convergence of their graph functions, which are defined on point clouds, occurs in a metric space (which is called the $TL^p$ space) that is characterized by optimal-transport maps. The $TL^p$ space has also been employed to obtain results for other} large-graph limits of point clouds (see, e.g., \cite{trillos2015variational, garcia2016continuum, garcia2020maximum, akash2022wasserstein}). 
A key difference between our paper and research
on $TL^p$ limits is that we do not require the limiting functional to be a local quantity, such as Euclidean perimeter or the usual Dirichlet energy. Instead, our limiting functionals are nonlocal limits with interaction potentials that are given by graphons. 
These functionals are ``relaxations" in the sense that one can recover local functionals from them as special cases for certain graphons that have singularities along the line $y = x$.

The original theory of graphons~\cite{lovasz2012large} {is concerned with} dense sequences of graphs, and Braides et al.~\cite{braides2020cut} also required the graphs in such sequences to be dense. However, most real-world graphs are sparse~\cite{newman2018, bollobas2007metrics}, so this density requirement is a major limitation of much research on graphons. Thankfully, the theory of graphons has been extended to {sparse sequences of graphs}~\cite{borgs2019LpI, borgs2018LpII}, and our analysis allows sparse sequences of graphs that converge to $L^p$ graphons. See Section \ref{sec: lp graphons} for more detail.

One recovers different types of TV functionals when taking the $\ep \ra 0$ limit of the classical GL, graph GL, and graphon GL functionals. In particular, we obtain a nonlocal, continuous TV functional in the $\ep \ra 0$ limit of the graphon GL {functional}. 
Nonlocal TV functionals have been useful in a variety of applications, especially in image processing~\cite{caffarelli2009nonlocal, elbouchairi2023nonlocal, gilboa2009nonlocal, liu2014new, lou2010image, arias2012nonlocal, aujol2015fundamentals, el2014mean, hafiene2018nonlocal}. They are also of theoretical interest because they generalize the notion of perimeter {from objects (e.g., ones in $\R^2$) with geometric regularity, on which one can compute gradients of functions, to less regular objects (such as objects in metric spaces)\cite{mazon2019nonlocal}.} 
The fractional GL functional is a well-studied example of a nonlocal GL functional~\cite{savin2012gamma, tarasov2005fractional, pu2013well}, but thus far it has not been connected to graph theory. 
A graphon GL {functional} is a fractional GL {functional} when a graphon is of the form $W(x,y) = \frac{1}{|x - y|^{1 + 2s}}$, with $s \in (0,1)$.


\section{Graphons, norms, and convergence}\label{sec: graphons} 

A ``graphon", which is a portmanteau of ``graph" and ``function", is a bounded, measurable, and symmetric function $W: \Omega^2 \ra \R$, where $\Omega \subset \R^d$ is a {connected and bounded} domain. The set of graphons is ${\mathcal{W}} = \{ W: \Omega^2 \ra \R\}$. Throughout this paper, we take $\Omega = (0,1)$. The closed interval $[0,1]$ is typically used in the graphon literature, whereas the  open interval $(0,1)$ is typically used in functional analysis {{to} avoid complications that relate to
isolated points. In the present paper, it makes no difference because {we always integrate $W$.}} We use the open interval to be consistent with the conventions of functional analysis, which provides the main technical machinery in our paper. More generally, one can choose $\Omega$ to be any domain in $\R^d$. 

We consider graphs that are weighted, undirected, and simple (i.e., there are no self-edges or multi-edges). Let $W_n$ denote a graph with the node set $[n] = \{1, \ldots, n\}$. {The graph $W_n$} has an associated adjacency matrix $A^{(n)}$ with entries 
$A^{(n)}_{ij}$. We {also} associate the graph $W_n$ with a function $W_n$ that takes the constant value $A^{(n)}_{ij}$ on the product interval $I_i \times I_j$, where 
\begin{equation}\label{eq: def interval}
    I_i = [(i - 1)/n, \; i/n) \text{ for } i \in \{2,\ldots, n \} \text{ and } I_1 = (0, 1/n)\,.     
\end{equation}
The relationship between the adjacency matrix and the graphon is thus 
\begin{equation}   \label{eq: step graphon} 
    W_n(x,y) = \left\{ A^{(n)}_{ij} \;\; \text{ for } (x,y) \in I_i \times I_j\,, \,\,  i,j \in {[n]}  \right\}  \,.
\end{equation}
We have thereby associated a graph with a step function by associating the $i$th node with the interval $I_i$ and associating each edge $(i,j)$ with the product interval $I_i \times I_j$. In this way, one can identify each graph $W_n$ with a graphon, which we will also denote by $W_n$ (see Remark \ref{remark: identify W_n}). We refer to a graphon that corresponds to a graph as a \textit{step graphon}.  

In Figure~\ref{fig: example graphon}, we show an example graph and its corresponding adjacency matrix and step graphon. By convention, the axes of the graphon begin at the upper left.

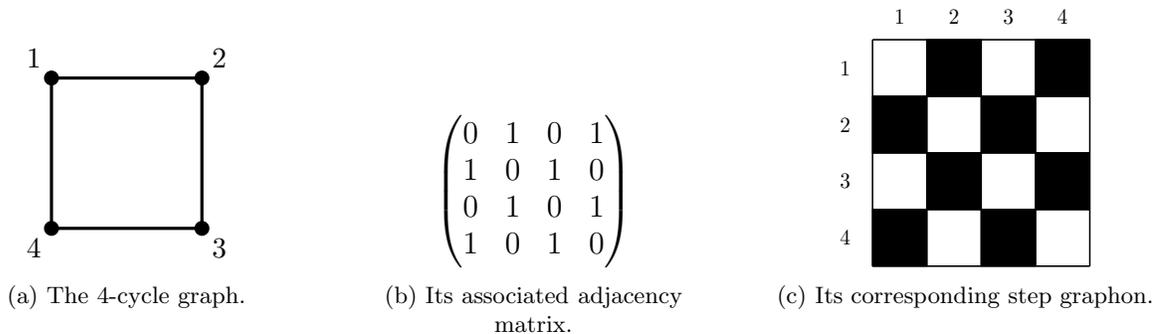
\begin{figure}[H]
\captionsetup[subfigure]{font=footnotesize}
\centering
    \subcaptionbox{The 4-cycle {graph} }[.32\textwidth]{%
        \begin{tikzpicture}
        \node at (0,0)[circle,fill,inner sep=2pt]{};
        \node at (0,2)[circle,fill,inner sep=2pt]{};
        \node at (2,0)[circle,fill,inner sep=2pt]{};
        \node at (2,2)[circle,fill,inner sep=2pt]{};

        \draw (0,2) node[anchor=south east] {$1$};
        \draw (2,2) node[anchor=south west] {$2$};
        \draw (2,0) node[anchor=north west] {$3$};
        \draw (0,0) node[anchor=north east] {$4$};
        \draw[-,very thick] (0,0)--(0,2);
        \draw[-,very thick] (0,0)--(2,0);
        \draw[-,very thick] (0,2)--(2,2);
        \draw[-,very thick] (2,0)--(2,2);
    \end{tikzpicture}
    }
    \subcaptionbox{Its associated adjacency \\ {matrix}}[.32\textwidth]{
    {\large $\begin{pmatrix}
        0&1&0&1
        \\
        1&0&1&0
        \\
        0&1&0&1
        \\
        1&0&1&0
    \end{pmatrix}$}
    }
    \hspace{.2cm}
    \subcaptionbox{Its corresponding step {graphon}}[.32\textwidth]{
    \resizebox{100pt}{100pt}{%
        \begin{tikzpicture}
            \draw[help lines, color=black, solid, thick, step = 1] (0,0) grid (4,4);
            
            \foreach \x in {1,..., 4}
                \node at (\x-.5, 4+.4) {\x} node at (0-.5, 4-\x+.5) {\x};
                
            \draw[color=black, fill = black] (1,3) rectangle (2,4); 
            \draw[color=black, fill = black] (0,2) rectangle (1,3); 
            \draw[color=black, fill = black] (3,3) rectangle (4,4); 
            \draw[color=black, fill = black] (2,2) rectangle (3,3); 
            \draw[color=black, fill = black] (1,1) rectangle (2,2); 
            \draw[color=black, fill = black] (3,1) rectangle (4,2); 
            \draw[color=black, fill = black] (0,0) rectangle (1,1); 
            \draw[color=black, fill = black] (2,0) rectangle (3,1);  
\end{tikzpicture}
}}
\caption{An example of a graph, its associated adjacency matrix, and its corresponding step graphon.}
\label{fig: example graphon}
\end{figure}

One can view any graphon $W: (0,1)^2 \ra \R$ as a large-graph limit by thinking of $(0,1)$ as a set with infinitely many nodes and taking $W(x,y)$ to be the weight of the edge between nodes $x \in (0,1)$ and $y \in (0,1)$. One can use graphons to represent both (1) families of graphs (by using a graphon as a probability distribution to generate graphs) and (2) limits of sequences of growing graphs. We employ the latter interpretation of graphons.


\subsection{Cut norm and cut convergence}

The {graphon-cut} norm (which is also known as the ``cut norm" and is closely related to the graph-cut functional) is the choice of topology for the space of graphons. {Accordingly,} both graphs and graphons converge to graphons with respect to cut norm. We introduce the graph-cut functional (which is sometimes called simply a ``cut functional") before introducing the cut norm. 

\begin{definition}[{graph cut}]
    For a partition $\{S, S^c\}$ of the nodes $[n]$ of a graph with adjacency matrix $A^{(n)}$, the {graph cut} is the functional
    \begin{equation}
        \mathrm{Cut}(S, S^c) = \sum_{i \in S, \, j \in S^c} A^{(n)}_{ij}  \,. 
    \end{equation}
    Equivalently, we express the graph-cut functional in terms of a graph function $u$ that takes values in the set $\{-1,1\}$ {by} using the expression 
    \begin{equation}\label{eq: graph-cut dirichlet form}
        \mathrm{Cut}(u) = {\frac{1}{8}} \sum_{i, j = 1}^n A^{(n)}_{ij} |u_i - u_j|^2 \quad \text{ with }\,\, u: [n] \ra \{-1,1\} \,.
    \end{equation}
\end{definition}

We define the cut norm $\|\cdot\|_\square$ on the space of graphons. The cut norm is closely related to the graph-cut functional, and it induces a metric such that $\mathcal{W}$ is a compact metric space~\cite{lovasz2007szemeredi}. Therefore, any bounded sequence $\{W_n\}_{n\in \N}$ of graphons has a subsequence {$\{W_{n'}\}$} such that $\|W_{n'} - W\|_\square \ra 0$.

\begin{definition}
    The \emph{cut norm} of a graphon $W$ is
\begin{equation} 
    \|W\|_\square = \sup_{S\subseteq (0,1)} \int_{S \times S^c} W(x,y)\,dx\,dy \,. 
\label{eq: cut norm} 
\end{equation}
\end{definition}

Definition \ref{eq: cut norm} highlights the similarity between the cut norm and the graph-cut functional. When a graphon is a step graphon $W_n$ (see equation~\eqref{eq: step graphon}), the cut norm~\eqref{eq: cut norm} becomes the finite sum
\begin{equation*}
	\|W_n\|_\square = \frac{1}{n^2} \sup_{S \subseteq [n]} \sum_{i\in S, \, j\in S^c} A^{(n)}_{ij} \,, 
\end{equation*}
which is the maximum {graph cut}, normalized by $1/n^2$, over all partitions $\{S, S^c\}$ of the nodes of $W_n$. There are other equivalent definitions of the cut norm \eqref{eq: cut norm}~\cite{janson2010graphons}. A particularly useful one for the present paper is
\begin{equation}
    \| W \|_\square = \sup_{\phi, \psi \in L^\infty((0,1);[-1,1])} \int_0^1 \int_0^1 W(x,y) {\phi}(x){\psi}(y)\,dx\,dy \,,
\label{eq: cut norm 1}
\end{equation}
where 
\begin{equation}
	L^\infty((0,1);[-1,1]) = \{f: (0,1) \ra [-1,1]\}\,.
\end{equation}
By normalizing $\phi \in L^\infty((0,1))$ to $\phi/\|\phi\|_\infty \in L^\infty((0,1);[-1,1])$, we obtain the equivalent definition
\begin{equation}
    \|W\|_\square = 
        {\sup_{\phi, \psi \in L^\infty((0,1))}}
        \frac{1}{\|\phi\|_\infty} \frac{1}{\|\psi\|_\infty} 
 \int_0^1 \int_0^1 \phi(x) \psi(y)W(x,y)\,dx\,dy \,.
\label{eq: cut norm 2}
\end{equation}

\begin{remark}\label{remark: identify W_n}
    One can identify any weighted graph with a step function $(0,1)^2 \ra \R$, and vice versa. We use the notation $W_n$ for both objects. 
    In concert with the fact that step functions are dense in $L^p$, one can approximate any graphon arbitrarily closely in cut norm by a graph. See~\cite[Section 3.3]{borgs2012convergent}, \cite[Remark 4.6]{janson2010graphons}, and~\cite{lovasz2007szemeredi}.
\end{remark}

\begin{remark}
    We use the notation ``$\sqra$" to denote convergence in cut norm. Accordingly, $W_n \sqra W$ {signifies}
    that $\|W_n - W\|_\square \ra 0$. 
\end{remark}

\begin{remark}
    The cut norm is equivalent to the operator norm of the kernel operator that is induced by the graphon $T_W(f) = \int_\Omega W(x,y) f(y) \, dy$, which is a linear operator $T_W: L^\infty((0,1)) \ra L^1(0,1)$. 
    (The norms $\|\cdot\|_A$ and $\|\cdot\|_B$ 
    are equivalent when
    $c_1 \|\cdot\|_A \leq \|\cdot \|_B \leq c_2 \|\cdot\|_A$ for some constants {$c_1$ and $c_2$}.)Additionally, people sometimes use the terms ``graphon" and ``kernel" interchangeably~\cite{janson2010graphons}.
\end{remark}

\begin{remark}
    {It is known that $\|W\|_\square \leq \|W\|_1$, where $\|\cdot\|_1$ is the $L^1(\Omega^2,\R)$ norm, for any graphon $W$~\cite{janson2010graphons}.} Additionally, for any step graphon with $n$ steps, $\|W_n\|_1 \leq \sqrt{2n} \|W_n\|_\square$~\cite[Equation 8.15]{lovasz2012large}.     Consequently, for each step graphon, the $L^1$ norm and cut norm are equivalent for finite $n$. However, this is not true when $n \ra \infty$. 
\end{remark}


\subsection{\texorpdfstring{$L^p$ graphons}{Lp graphons}}\label{sec: lp graphons}

The classical graphons that we discussed in Section \ref{sec: graphons} are known as ``$L^\infty$ graphons". They are functions in the space
\begin{equation}
	L^\infty((0,1)^2) = \{ W\;:\; { \|W\|_\infty := \text{ess sup}}_{x,y\in (0,1)^2} |W(x,y)| < \infty\} \,,
\end{equation}
where the essential supremum ess sup equals the supremum up to a measure-0 set.

The set of $L^\infty$ graphons~\cite{lovasz2012large} was built for dense sequences of graphs, 
which are sequences of graphs whose number of edges scales with the number $n$ of nodes as {$\Theta(n^2)$ (i.e., it scales asymptotically in proportion to $n^2$). 
Any sequence of graphs with $o(n^2)$ (i.e., asymptotically strictly less than $cn^2$ for some constant $c > 0$) edges converges to the zero graphon $W \equiv 0$ because the edge set is a set of measure $0$ in the $n \ra \infty$ limit. One can see this because the Riemann sum
\begin{equation*}
    \int_0^1 \int_0^1 W_n(x,y) \, dx\,dy = \frac{1}{n^2} \sum_{i,j = 1}^n A^{(n)}_{ij}
\end{equation*}
is nonzero in the $n\ra \infty$ limit only if the number of nonzero terms in the adjacency matrix $A^{(n)} = \{A^{(n)}_{ij}\}_{i,j = 1}^n$ is {$\Theta(n^2)$.} 

It is common to use $\calW$ to denote the set of $L^\infty$ graphons and to use $\calW_0 \subset \calW$ to denote the set of graphons that take values in $[0,1]$. However, one can identify any $W \in \calW$ with its normalized version $W/\|W\|_\infty \in \calW_0$. Therefore, we refer to both $\calW$ and $\calW_0$ as $L^\infty$ graphons to contrast them with $L^p$ graphons, which we will define shortly.

Although most real-world graphs are sparse, they often also have some nodes with degrees that one expects to grow linearly with $n$~\cite{newman2018}. For example, perhaps the total number of edges of a graph scales as $\Omega(n)$ (and perhaps the graph has a heavy-tailed degree distribution). If we use the relationship \eqref{eq: step graphon} to define graphons that correspond to sparse graphs, we obtain graphons that are nonzero only on sets of measure $0$. (This occurs because the number $I_i \times I_j$  of grid points is of order $n$, whereas anything with nonzero area must have order $n^2$.) To extend the theory of graphons to sparse sequences of graphs, Borgs et al.~\cite{borgs2019LpI, borgs2018LpII} introduced $L^p$ graphons (see \cite[Definition 2.7]{borgs2019LpI}), which allow graphon theory to encompass a much wider variety of sparse-graph sequences.

The set of $L^p$ graphons extends the set of $L^\infty$ graphons to the space 
\begin{equation}
	L^p((0,1)^2) = \left\{W\; : \; \|W\|_p:=\left(\int_0^1\int_0^1 |W(x,y)|^p \,dx\,dy\right)^{\frac{1}{p}} < \infty \right\} 
\end{equation}
for $p \geq 1$.  See \cite[Theorem 2.8]{borgs2019LpI} for a characterization of the sequences of graphons that converge to an $L^p$ graphon. In the present paper, we view graphons as functions in $L^p((0,1)^2)$ for $p \in [1,\infty]$, 
{so} we assume any necessary properties on step graphons that allow them to be $L^p$ functions.
 
When $\Omega$ is a bounded domain, $L^p(\Omega) \subset L^q(\Omega)$ for $p > q \geq 1$. Therefore, the set of $L^\infty$ graphons, which includes all dense graphs, is contained in the set of $L^p$ graphons. Similarly, every $L^p$ graphon is also an $L^1$ graphon. In the proofs of limits (2) and (3) (see Sections \ref{sec: limit (2)} and \ref{sec: limit (3)}), we {assume that}
$W_n$ and $W$ are in {$L^1((0,1)^2)$.} 
In the proof of limit (4) (see Section \ref{sec: limit (4)}{)}, we assume that $W_n$ and $W$ are in $L^\infty((0,1)^2)$.


\section{Functions and functionals on graphs and graphons}\label{sec: notation}

Because graphons are functions, it is natural to study them using ideas from functional analysis. Relevant notions include convergence, compactness, functionals on graphs and graphons, and $\Gamma$-convergence of those functionals. It is also appropriate to analyze the function spaces on which the functionals act.

We consider GL functionals and TV functionals, which act on ``graph functions" (i.e., functions on graphs) and ``graphon functions" (i.e., functions on graphons), respectively. Both the GL and TV functionals have classical, graph, and graphon versions. We defined the classical GL functional in~\eqref{eq: classical GL} and the classical TV functional in~\eqref{eq: classical TV}. The graph GL and TV {functionals} are discrete and were defined {in~\cite{bertozzi2012diffuse} and~\cite{szlam2010total}, respectively.} We discuss them in {Section \ref{sec: graph functionals}.}
In Section \ref{sec: graphon functionals}, we define graph GL and TV functionals, which are continuous and involve Young measures.

The classical GL {functional \eqref{eq: classical GL}} is defined {on} $W^{1,2}((0,1))$, which is the Sobolev space of functions on $(0,1)$ that are $L^2$ with first derivatives that are also $L^2$ \cite[Chapter 5]{evans2022partial}. 
The classical {TV functional \eqref{eq: classical TV}} is defined on $W^{1,1}(0,1)$. We extend the domain of both functionals to $L^\infty((0,1))$ by imposing the value $+\infty$ whenever they otherwise would be undefined.


\subsection{Function spaces}\label{four-one}

We consider two types of function spaces: (1) spaces in which graphons live (i.e., function spaces of graphons) and (2) spaces of functions that are defined on graphons (i.e., function spaces of functions on graphons). As we discussed in Section \ref{sec: graphons}, graphons are always $L^p$ functions {for $p\in [1,\infty]$}. The functions on graphons that we consider are always {functions in $L^\infty((0,1))$}. 

The space of functions on $n$-node graphs is 
\begin{equation}
	\mathcal{V}^n = \{\hat{u}: [n] \ra \R\} \,.
\end{equation}	
Each function $\hat{u} \in \mathcal{V}^n$ has an associated step function $u: (0,1)\ra \R$. Let $u(x) = \hat{u}(i)$ for $x\in I_i$ for the  intervals $I_i$ that we defined in \eqref{eq: def interval}. 
This identification embeds $\mathcal{V}^n$ into the space $L^\infty((0,1))$ of bounded functions. Henceforth, we identify both $\hat{u}$ and $u$ as $u$. We also consider the subset 
\begin{equation}
	\mathcal{V}^n_b = \{u: [n] \ra \R\,, u_i \in \{\pm 1\} \,\,\text{for all}\,\, i\} 
\end{equation}	
of $\mathcal{V}^n$ that consists of binary graph functions.


\subsection{\texorpdfstring{$\Gamma$-convergence}{Gamma-convergence}}\label{sec: gamma convergence}

The notion of $\Gamma$-convergence of functionals is useful in optimization and the calculus of variations~\cite{braides2006handbook, van2012gamma}. In concert with a certain compactness property {(which we will specify shortly)}, the $\Gamma$-convergence of a sequence of functionals guarantees the convergence of corresponding minimizers {in} the sequence of functionals.  

\begin{definition} \label{def: gamma convergence}
    Let $X$ be a metric space, and consider a sequence of functionals $F_n: X \ra \R \cup \{\pm \infty\}$ for $n\in \{1,2,\ldots\}$. We say that $F_n$ \emph{$\Gamma$-converges} to $F: X \ra \R \cup \{\pm \infty\}$, which we denote by $F_n \gamra F$, with respect to $u_n \ra u$ {if 
    \begin{enumerate}[(i)]
        \item{{$\liminf_{n\ra\infty} F_n(u_n) \geq F(u)$ for every sequence $\{u_n\}$ such that $u_n \ra u$;}}
        \item{there exists a sequence $\{u_n\}_{n\in \N}$ such that $\limsup_{n\ra \infty} F_n(u_n) \leq F(u)$.} 
    \end{enumerate}}
\end{definition}

\begin{remark}\label{remark: gamma convergence norm}
    The definition of $\Gamma$-convergence of functionals requires a choice of norm for the convergence $u_n \ra u$. Therefore, $\Gamma$-convergence occurs \emph{with respect to} the convergence $u_n \ra u$. 
\end{remark}

If it is also true that any sequence $\{u_n\}_{n = 1}^\infty$ for which $\{F_n(u_n)\}_{n=1}^\infty$ is uniformly bounded has a convergent subsequence $\{u_{n_k}\}$, then the corresponding minimizers of $F_n$ converge to the minimizer(s) of $F$. This criterion, which we call the ``compactness property", is sometimes called the ``equicoerciveness property"~\cite{van2012gamma}.

It is useful to be purposeful when choosing the metric under which $u_n$ converges to $u$. When the convergence metric is stronger (where convergence in a stronger metric implies convergence in a weaker metric), {it} is easier to prove the lim inf inequality (i.e., the inequality in (i)) for $\Gamma$-convergence but harder to construct a {sub}sequence for the lim sup inequality (the inequality in (ii)) and for compactness. Conversely, when the convergence metric is weaker, it is harder to prove the lim inf inequality but easier to construct a subsequence for the lim sup inequality. In the present paper, we prove $\Gamma$-convergence results with respect to narrow convergence of Young measures (see Definition \ref{definition narrow convergence}). 


\subsection{Graph functions and functionals}\label{sec: graph functionals}

Recall the classical GL functional~\eqref{eq: classical GL}, which is defined on functions $u: (0,1) \ra \R$ by 
\begin{equation}
	\GL_\ep(u) = \ep \int_0^1 |\nabla u|^2 \,dx + \frac{1}{\ep} \int_0^1 \Phi(u(x)) \,dx \,,
\end{equation}	
where $\Phi$ is a double-well potential that has zeros at $s = \pm 1$. The double-well potential $\Phi$ can take a general form {(see~\cite[assumptions ({\it W}$_1$)--({\it W}$_4$)]{van2012gamma}),} but we use the standard choice 
\begin{equation}
	\Phi(s) = (s^2 - 1)^2 \,.
\end{equation}	

The graph version $\GL^{W_n}_\ep$ of the GL functional is analogous to $\GL_\ep$. It acts on $u \in \mathcal{V}^n$ {instead} of on $u\in L^\infty((0,1))$. We replace the gradient term $|\nabla u|^2$ by a finite-difference term that is weighted by the adjacency matrix, and we replace the double-well integral by a finite sum. (See {\cite[Section 2.2]{van2012gamma}} for further discussion of graph analogues of calculus operators.) {For $u \in \calV^n$,} we thus obtain 
\begin{align} 
    \GL^{W_n}_\ep (u) &= \frac{1}{n^2}\sum_{i,j = 1}^n A^{(n)}_{ij} |u_i - u_j|^2 + \frac{1}{\ep n} \sum_{i = 1}^n \Phi(u_i) \label{line_one} \\
   		 &= \int_0^1\int_0^1 W_n(x,y)|u(x) - u(y)|^2 \, dx \, dy + \frac{1}{\ep} \int_0^1 \Phi(u(x))\,dx \,, \label{eq: graph GL}
\end{align}
where~\eqref{line_one} uses the adjacency matrix and \eqref{eq: graph GL} {uses the definition~\eqref{eq: step graphon} of} the step graphon $W_n$. Similarly, the graph TV functional replaces the term $|\nabla u|$ in~\eqref{eq: classical TV} with a finite difference. It is finite only for binary functions. The graph TV functional is
\begin{align} 
    \TV^{W_n}(u) &= \begin{cases}
        \sum_{i,j=1}^n A^{(n)}_{ij} |u_i - u_j| \;\; &\text{ if } \,\, u \in \mathcal{V}^n_b 
        \\
        +\infty \;\; &\text{ if } \,\, u \in \mathcal{V}^n \setminus \mathcal{V}^n_b 
    \end{cases} \label{eq: graph TV discrete}
    \\
    &= \begin{cases}
    			\int_0^1 \int_0^1 W_n(x,y) |u(x)-u(y)| \,dx\,dy \;\; &\text{ if } \,\, u \in \mathcal{V}^n_b   \\
			    + \infty \;\; &\text{ if } \,\, u \in \mathcal{V}^n \setminus \mathcal{V}^n_b \,. 
\end{cases} \label{eq: graph TV}
\end{align}

The Dirichlet energy 
\begin{equation}\label{eq: classical Dirichlet}
    D(u) = \int_0^1 |\nabla u(x)|^2 \,dx 
\end{equation}
{has a similar form {as}} the graph-cut functional, but it acts on $W^{1,2}(0,1)$ functions. The Sobolev embedding of $W^{1,2}(0,1)$ into $L^\infty(0,1)$ allows us to define the Dirichlet energy $D$ on all $L^\infty(0,1)$ functions by setting $D(u) = +\infty$ for $u \in L^\infty(0,1) \setminus W^{1,2}(0,1)$.
The graph Dirichlet energy 
\begin{equation} \label{eq: graph dirichlet}
    D^{W_n}(u) = \begin{cases}
        \int_0^1\int_0^1 W_n(x,y) |u(x)-u(y)|^2 \,dx\,dy \;\; &\text{ if } \,\, u \in \mathcal{V}^n
        \\
        +\infty \;\; &\text{ if } \,\, u \in L^\infty((0,1))\setminus \mathcal{V}^n
    \end{cases}
\end{equation}
acts on graph functions and replaces the gradient term in~\eqref{eq: classical Dirichlet} with a finite difference.  

\begin{remark}\label{rmk: graph-cut equivalence}
    The graph Dirichlet energy is a generalization of the graph-cut functional \eqref{eq: graph-cut dirichlet form}: if we restrict graph functions to the range $\{-1,1\}$, then the graph Dirichlet energy \eqref{eq: graph dirichlet} is equal to the graph-cut functional \eqref{eq: graph-cut dirichlet form} (i.e., $D^{W_n}(u) = \text{Cut}(u)$ for all graph functions $u$).
\end{remark}


\subsection{Young measures and {weak-*} convergence of functions}\label{sec: ym}

In our study of $\Gamma$-convergence, we need to use Young measures \cite{dacorogna2006weak}, which extend the feasible set of the GL-minimization and TV-minimization problems to a space of measures. In general, the study of {the convergence of} functionals such as $\int_0^1 f(v(x)) \,dx$ (for a given continuous and bounded functional $f$) that act on $v \in L^\infty((0,1))$ presents a significant challenge. Because $L^\infty((0,1))$ is not a compact space, a sequence\footnote{In this sentence and many other times throughout this paper, we abuse the notation ``$\in$" to signify that each element of a sequence (rather than the sequence as a single object) is an element of a set.} $\{v_n\} \in L^\infty((0,1))$ may not have a limit in $L^\infty((0,1))$.  Therefore, a sequence $\{f_n(v_n)\}$ may not have a limit $f(v)$ in which $v$ is a limit of $v_n$. Our functionals ($\GL^W_\ep$, $\GL^{W_n}_\ep$, and so on) act on $L^\infty((0,1))$ and have the form $\int_0^1 f(v(x))\, dx$. To overcome this challenge, we extend these functionals to act on Young measures (rather than on $L^\infty$ functions). Intuitively, our use of Young measures accommodates the rapidly oscillating minima of the graphon GL functionals. The minima can oscillate arbitrarily rapidly, and Young measures provide an effective limit of such oscillating functions when they are acted on by continuous bounded functions in the integrals. In Section \ref{sec: examples}, we further discuss the need for Young measures. 

It is useful to review some definitions and properties that were presented in~\cite{braides2020cut}. 
\begin{definition}[Young measure]\label{def: ym}
    A \emph{Young measure} $\nu$ on $(0,1)\times \R$ is a family $\{\nu_x\}_{x \in (0,1)}$ of probability measures, which are
    parametrized by $x \in (0,1)$, such that the map 
\begin{equation}
    x\mapsto \int_\R f(\lambda) \, d\nu_x(\lambda)
\label{eq: def ym}
\end{equation}
is a Lebesgue-measurable function for every continuous and bounded function $f \in C^b(\R)$.
(We use the notation $C^b$ to signify continuous and bounded functions.)
\end{definition}

Intuitively, $\nu_x$ is a ``slice" of the Young measure $\nu$ at the value $x$. One can think of $\nu_x$ as analogous to the value of a function $u(x)$. (The value of a function at the point $x$ is the ``slice" of the function at $x$.) However, instead of assigning a value (as a function $u$ does) to each point (i.e., node) $x$ to designate the ``state" of $x$, a Young measure $\nu$ assigns a probability distribution $\nu_x$ to the point $x$. For Young measures on graphons, one can interpret the distribution $\nu_x$ as incorporating uncertainty into the state of node $x$. This is analogous to the way that a function $u_n$ on a graph has values $u_n(x)$ that give the state of node $x$.

Let $\mathcal{Y}((0,1),\R)$ denote the set of all Young measures on $(0,1) \times \R$. With the next definition, we see how $\mathcal{Y}((0,1),\R)$ extends the set of $L^\infty((0,1))$ functions. 

\begin{definition}[Young measure corresponding to a measurable function]
    A \emph{Young measure corresponding to a {Lebesgue-}measurable function} $u: (0,1) \ra \R$ is the family of delta measures 
\begin{equation} 
    \{\nu_x\}_{x\in (0,1)} = \{\delta_{u(x)}\}_{x\in (0,1)} \,. 
\label{eq: delta ym}
\end{equation}
{We refer to such measures as \emph{$\delta$-Young measures.}}

\end{definition}

With the definition of $\nu$ in equation~\eqref{eq: delta ym}, the map \eqref{eq: def ym} is the evaluation map $x \mapsto f(u(x))$. {This evaluation map is Lebesgue-measurable, as required for Definition \ref{def: ym}, because of the Lebesgue-measurability of $u$ and the continuity of $f$.} 

We now define the {weak-* topology (which is also known as ``weak-star topology" and the ``ultraweak topology")} on {the space} $L^\infty$.

\begin{definition}[Weak-* topology {on the space $L^\infty((0,1))$}]
A sequence {$\{v_n\} \in L^\infty((0,1))$} converges in the weak-* topology to {$v \in L^\infty((0,1))$} if 
\begin{equation*}
    \lim_{n\ra \infty} \int_0^1 {v_n(x)}g(x) \, dx = \int_0^1 {v(x)}g(x) \, dx
\end{equation*}
for any $g \in L^1((0,1))$. We write that $v_n \weakstarra v$ in $L^\infty((0,1))$. 
\end{definition}

\begin{definition}[Narrow convergence of Young measures]\label{definition narrow convergence}
    A sequence {$\{\nu^n\} \in \mathcal{Y}((0,1),\R)$} \emph{converges narrowly} to $\nu \in \mathcal{Y}((0,1), \R)$ if the map~\eqref{eq: def ym} converges {in the weak-* topology} in $L^\infty((0,1))$ for {all} 
    {continuous and bounded functions} $f \in C^b(\R)$. That is, for all $x \in (0,1)$, we have 
    \begin{equation}
	    \int_\R f(\lambda) \,d\nu^n_x(\lambda) \weakstarra \int_\R f(\lambda) \,d\nu_x(\lambda) \,. 
    \end{equation}
\end{definition}

\begin{lemma}[Narrow convergence of product Young measures{~\cite[Lemma 8]{braides2020cut}}]\label{lem: narrow convergence of products}
    Let $\{\nu^n\}$ be a sequence of Young measures that converges narrowly to $\nu \in \mathcal{Y}((0,1), \R)$. {{It is then guaranteed that} the {sequence ${\{\nu^n \otimes \nu^n\}}$ of product measures}} 
    on $(0,1)^2 \times \,\R^2$ converges narrowly to $\nu \otimes \nu \in \mathcal{Y}((0,1)^2,{\R^2})${. That is,} 
    {$d (\nu^n \otimes \nu^n)_{(x,y)}(\lambda,\mu) = d \nu^n_x(\lambda) \, d \nu^n_y(\mu)$ and}
    \begin{equation*}
        \iint_{\R^2} f(\lambda,\mu) \,d (\nu^n \otimes \nu^n)_{(x,y)}(\lambda,\mu)
        \weakstarra \iint_{\R^2} f(\lambda,\mu) \,d (\nu^n \otimes \nu^n)_{(x,y)}(\lambda,\mu)
    \end{equation*}
    for all $f \in C^b(\R^2)$, where $d (\nu \otimes \nu)_{(x,y)}(\lambda,\mu) = d \nu_x(\lambda) \, d \nu_y(\mu)$.
\end{lemma}

The following lemma states the key compactness property of $\mathcal{Y}((0,1), \R)$ that justifies the use of Young measures in our $\Gamma$-convergence results that involve graphon limits.

\begin{lemma}[Prohorov's Theorem {\cite[Theorem 9]{braides2020cut}}]  \label{prohorov's theorem}
 Let $\{u_n\}_{n\in \N}$ be a bounded sequence in $L^1((0,1))$, and let $\{\nu^n\}_{n\in \N}$ denote the sequence of corresponding Young measures (see equation~\eqref{eq: delta ym}). There then exists a subsequence $\{u_{n_k}\}_{k\in \N}$ and a Young measure $\nu$ such that $\nu^{n_k}$ converges narrowly to $\nu$ as $k \ra \infty$. 
\end{lemma}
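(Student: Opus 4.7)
The plan is to reduce this Young-measure compactness statement to classical Prohorov compactness for Borel probability measures on a locally compact Polish space, using the standard identification of a Young measure on $(0,1) \times \R$ with a Borel measure on the product $(0,1) \times \R$ whose first marginal is Lebesgue measure. Concretely, to each $\nu^n = \{\delta_{u_n(x)}\}_{x \in (0,1)}$ I would associate the Borel measure $\mu^n$ on $(0,1) \times \R$ defined by $d\mu^n(x,\lambda) = d\nu^n_x(\lambda)\,dx$; this is just the push-forward of Lebesgue measure under $x \mapsto (x,u_n(x))$, and it is a probability measure.

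The crucial step is to verify that the sequence $\{\mu^n\}$ is tight in the classical sense. Tightness in the $x$-variable is automatic since $(0,1)$ is already essentially bounded. Tightness in the $\lambda$-variable is exactly where the hypothesis that $\{u_n\}$ is bounded in $L^1((0,1))$ enters: if $\|u_n\|_{L^1} \leq M$ for all $n$, then by the Chebyshev--Markov inequality
\begin{equation*}
\mu^n\bigl( (0,1) \times \{|\lambda| > R\} \bigr) \;=\; \bigl|\{x \in (0,1) : |u_n(x)| > R\}\bigr| \;\leq\; \frac{M}{R} \,.
\end{equation*}
Hence for any $\varepsilon > 0$ the compact set $K_\varepsilon = [0,1] \times [-M/\varepsilon,\, M/\varepsilon]$ satisfies $\mu^n(K_\varepsilon) \geq 1 - \varepsilon$ uniformly in $n$. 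Classical Prohorov then yields a subsequence $\mu^{n_k}$ that converges weakly to some Borel probability measure $\mu$ on $(0,1) \times \R$ tested against $C_b$ functions of the product space.

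The remaining step is to upgrade this weak convergence of measures on $(0,1) \times \R$ to narrow convergence of the associated Young measures in the sense of Definition~\ref{definition narrow convergence}. First I would observe that, because the first marginal of each $\mu^n$ is Lebesgue measure on $(0,1)$, the limit $\mu$ has the same property, so by the disintegration theorem there exists a Young measure $\nu = \{\nu_x\}_{x \in (0,1)}$ with $d\mu(x,\lambda) = d\nu_x(\lambda)\,dx$. Then for any product test function $f(\lambda) g(x)$ with $f \in C_b(\R)$ and $g \in C_c((0,1))$, weak convergence of $\mu^{n_k}$ gives
\begin{equation*}
\int_0^1 g(x) \Bigl(\int_\R f(\lambda)\, d\nu^{n_k}_x(\lambda)\Bigr) dx \;\longrightarrow\; \int_0^1 g(x) \Bigl(\int_\R f(\lambda)\, d\nu_x(\lambda)\Bigr) dx \,,
\end{equation*}
which is precisely testing the definition of weak-$*$ convergence in $L^\infty((0,1))$ against $g$.

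The main obstacle, and the place where care is required, is extending this testing from $g \in C_c((0,1))$ to arbitrary $g \in L^1((0,1))$. This is a density/equicontinuity argument: the bracketed functions $x \mapsto \int_\R f\, d\nu^{n_k}_x$ are uniformly bounded in $L^\infty$ by $\|f\|_\infty$, so the family of linear functionals $g \mapsto \int g \cdot \bigl(\int f\, d\nu^{n_k}_x\bigr) dx$ is equicontinuous on $L^1((0,1))$, and convergence on the dense subspace $C_c((0,1))$ propagates to all of $L^1((0,1))$. Together these steps yield narrow convergence $\nu^{n_k} \to \nu$ and complete the proof.
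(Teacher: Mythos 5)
Your proposal is correct, but note that the paper does not actually prove this lemma --- it is imported verbatim as \cite[Theorem 9]{braides2020cut}, so there is no internal argument to compare against. What you have written is the standard self-contained proof of the fundamental theorem of Young measures: identify each $\nu^n$ with the push-forward measure $\mu^n$ on the product space, use the $L^1$ bound and Chebyshev's inequality to get the uniform tightness $\mu^n\bigl((0,1)\times\{|\lambda|>R\}\bigr)\le M/R$ (which is exactly the condition that prevents mass from escaping to $\lambda=\pm\infty$ and degrading the limit to a subprobability-valued family), apply classical Prohorov, and disintegrate the limit against its Lebesgue first marginal. All the key steps are present and in the right order, and the final density argument --- uniform boundedness of $x\mapsto\int f\,d\nu^{n_k}_x$ in $L^\infty$ plus convergence against $C_c((0,1))$ upgrading to all of $L^1((0,1))$ --- correctly delivers the weak-$*$ convergence demanded by Definition~\ref{definition narrow convergence}. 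Two cosmetic points: your compact sets $K_\varepsilon=[0,1]\times[-M/\varepsilon,M/\varepsilon]$ are not subsets of $(0,1)\times\R$, so you should either shave the $x$-interval to $[\delta,1-\delta]$ (costing an extra $2\delta$ of mass, which is harmless) or work on $\R^2$ and observe afterwards that the limit assigns no mass to $\{0,1\}\times\R$ because its first marginal is Lebesgue measure; and you should record explicitly that weak convergence against $g(x)\in C_b((0,1))$ forces the first marginal of the limit $\mu$ to be Lebesgue measure before invoking disintegration. Neither is a genuine gap. An alternative route used in parts of the Young-measure literature is via the duality $\bigl(L^1((0,1);C_0(\R))\bigr)^*\cong L^\infty_{w}((0,1);\mathcal{M}(\R))$ and Banach--Alaoglu, which trades the product-measure bookkeeping for a functional-analytic compactness statement; your Prohorov-based version is more probabilistic and arguably more transparent about where the $L^1$ bound is used.
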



\subsection{Graphon functions and functionals}\label{sec: graphon functionals}

We define the graphon GL functional  
\begin{equation} \label{eq: graphon GL}
    \GL^W_\ep (\nu) = \int_0^1\int_0^1 W(x,y) \int_{\R^2} |\lambda - \mu|^2 \,d\nu_x(\lambda) \,d\nu_y(\mu)\,dx\,dy  
	+ \frac{1}{\ep}\int_0^1 \int_\R \Phi(\lambda) \,d\nu_x(\lambda) \,dx \,. 
\end{equation} 
The first term of~\eqref{eq: graphon GL} is the graphon Dirichlet energy
\begin{equation}\label{eq: graphon dirichlet}
    D^W(\nu) = \int_0^1\int_0^1 W(x,y) \int_{\R^2} |\lambda - \mu|^2 \,d\nu_x(\lambda) \,d\nu_y(\mu) \,dx\,dy  \,,
\end{equation}
which is the graphon analogue of the graph Dirichlet energy~\eqref{eq: graph dirichlet}. For graphs $W_n$, we also use \eqref{eq: graphon dirichlet}, which is more general than the graph Dirichlet energy~\eqref{eq: graph dirichlet}. Equation~\eqref{eq: graphon dirichlet} reduces to~\eqref{eq: graph dirichlet} when $\nu$ is a Young measure that corresponds to a {measurable} function $u$.

The inner integral $\int_{\R^2} |\lambda - \mu|^2 \,d\nu_x(\lambda) \,d\nu_y(\mu)$ is an expectation of $|\lambda - \mu|^2$ with respect to the probability measures $\nu_x$ and $\nu_y$. This is a probabilistic analogue of the term $|u(x) - u(y)|^2$ in the graph GL functional. When the Young measures are $\nu_x = \delta_{u(x)}$ and $\nu_y = \delta_{u(y)}$, we recover $|u(x) - u(y)|^2$.

The graphon TV functional, which also acts on $\nu \in \mathcal{Y}((0,1),\R)$, is analogous to the graph TV functional, just as the graphon GL functional is analogous to the graph GL functional. The graphon TV functional is
\begin{equation}
    \TV^W(\nu) = \begin{cases} 2 \int_0^1\int_0^1 W(x,y) \int_{\R^2}|\lambda - \mu| \, d\nu_x(\lambda)\,d\nu_y(\mu) \,dx\,dy  &\text{ if } \,\, \nu \in \mathcal{Y}^b \\
    + \infty & \text{ if } \,\, \nu \in \mathcal{Y}((0,1),\R) \setminus \mathcal{Y}^b\,,
    \end{cases} 
    \label{eq: graphon TV}
\end{equation}
where $\mathcal{Y}^b$ denotes the set of Young measures {$\nu$} with support on $\{-1,1\}$ {{(i.e., the union of the supports of $\{\nu_x\}_{x \in (0,1)}$ is $\{-1,1\}$)}.


\section{\texorpdfstring{Sequential limit: $\ep$ then $n$ (i.e., $\GL^{W_n}_\ep \gamra \TV^{W_n} \gamra \TV^W$)}{Sequential limit: ep then n}}\label{sec: sequential ep then n}

In this section, we prove the limits (1) and (2) from Figure \ref{figure:convergences}. Limit (1) was already proven by van Gennip and Bertozzi \cite[Theorem 3.1]{van2012gamma}, and we state their result for completeness. Limit (2) was proven {for} square-lattice graphs in \cite[Theorem 4.3]{van2012gamma} and for point clouds in~\cite{garcia2016continuum}. We {prove limit (2) for} general sequences of {weighted, undirected, and simple} graphs. Our proof closely follows the proof of the main theorem of {Braides et al.}~\cite{braides2020cut}.


\subsection{\texorpdfstring{Limit (1): $\GL^{W_n}_\ep \gamra \TV^{W_n}$}{Limit (1)}}

We state two key results from~\cite{van2012gamma} that lead to
our limit (1). The cruxes of these results are that (1) $\GL^{W_n} \gamra \TV^{W_n}$ and that (2) the functionals $\GL^{W_n}$ and $\TV^{W_n}$ are defined on a compact set of graph functions. Consequently, the minimizers of $\GL^{W_n}$ converge to the minimizers of $\TV^{W_n}$. 
Propositions \ref{prop: limit 1 convergence} and \ref{prop: compactness limit 1} hold for all {undirected and weighted} graphs $W_n$.

\begin{proposition}[{$\Gamma$-convergence} {\cite[Theorem 3.1]{van2012gamma}}] \label{prop: limit 1 convergence} 
The graph GL functional~\eqref{eq: graph GL} $\Gamma$-converges to the graph TV functional~\eqref{eq: graph TV} as $\ep \ra 0$ with respect to $u_n \ra u$ in $\mathcal{V}^n$. That is,
   \begin{equation}
     \GL^{W_n}_\ep \gamra \TV^{W_n} \,. 
    \end{equation}
\end{proposition}

In concert with Proposition \ref{prop: limit 1 convergence}, the following compactness property of the set $\calV$ of functions guarantees that the minimizers of the $\Gamma$-converging functionals also converge.

\begin{proposition}[{Compactness} {\cite[Theorem 3.2]{van2012gamma}}]\label{prop: compactness limit 1}
     Let $\{\ep_n\}_{n=1}^\infty \in \R_+$ be a sequence such that $\ep_n \ra 0$ as $n\ra \infty$, and let $\{u_n\}_{n=1}^\infty \subset \mathcal{V}$ be a sequence for which there exists a constant $C > 0$ such that $\GL^{W_n}_{\ep_n} < C$ for all $n \in \N$. There then exists a subsequence $\{u_{n'}\}_{n'=1}^\infty \subseteq \{u_n\}_{n=1}^\infty$ and $u_\infty \in \mathcal{V}^n_b$ such that $u_{n'} \ra u_\infty$ as $n\ra \infty$. 
\end{proposition}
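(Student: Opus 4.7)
The plan is to exploit the double-well term of $\GL^{W_n}_{\ep_n}$ together with the finite-dimensionality of the function space on a fixed $n$-node graph $W_n$. (Throughout, I read $W_n$ and the ambient space $\calV^n$ as fixed finite-dimensional objects; the subscript $n$ on $\ep_n$ and $u_n$ plays the role of an independent sequence index, so the two occurrences of $n$ are superficially overloaded but logically independent.)

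First, I would discard the nonnegative Dirichlet term from the bound $\GL^{W_n}_{\ep_n}(u_n) < C$ and retain only the potential piece, which gives $\int_0^1 \Phi(u_n(x))\,dx < C\ep_n \to 0$. Writing the step function $u_n = \sum_{i=1}^n u_{n,i}\,\mathbf{1}_{I_i}$, this reads $\frac{1}{n}\sum_{i=1}^n \Phi(u_{n,i}) \to 0$ along the sequence index.

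Second, since $\Phi(s) = (s^2 - 1)^2$ is coercive, this vanishing forces each coordinate $u_{n,i}$ to lie in a common compact interval $[-M, M]$ for all sufficiently large sequence index. Because $\calV^n \cong \R^n$ is finite-dimensional, Bolzano--Weierstrass then extracts a subsequence $u_{n'} \to u_\infty = (v_1,\ldots,v_n) \in \R^n$ coordinatewise, equivalently in $L^\infty((0,1))$ under the step-function identification over the fixed partition $\{I_1,\ldots,I_n\}$.

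Finally, continuity of $\Phi$ lets me pass to the limit in $\sum_i \Phi(u_{n',i}) \to 0$, yielding $\sum_i \Phi(v_i) = 0$ and hence $v_i \in \{-1, +1\}$ for each $i$, so $u_\infty \in \calV^n_b$. The main obstacle here is really only notational, namely unambiguously separating the fixed ``graph-size'' $n$ from the sequence index $n$ in the hypothesis; once that is done the argument is routine finite-dimensional compactness combined with coercivity and continuity of the double-well potential, and I do not anticipate a substantive analytical difficulty.
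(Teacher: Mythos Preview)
Your argument is correct. The paper does not supply its own proof of this proposition; it is simply quoted from \cite[Theorem~3.2]{van2012gamma} as background for limit~(1), where the graph $W_n$ is held fixed and only $\ep \to 0$. Your reading of the overloaded index $n$ is the right one in that context, and the three steps you outline --- drop the nonnegative Dirichlet part to get $\tfrac{1}{n}\sum_i \Phi(u_{n,i}) \leq C\ep_n \to 0$, invoke coercivity of $\Phi$ plus Bolzano--Weierstrass in the finite-dimensional $\calV^n \cong \R^n$, then pass to the limit by continuity of $\Phi$ to force $v_i \in \{-1,1\}$ --- reproduce exactly the standard compactness argument that underlies the cited result. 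There is nothing to compare against in the present paper, and no substantive gap in what you wrote.
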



\subsection{\texorpdfstring{Limit (2): $\TV^{W_n} \gamra \TV^W$}{Limit (2)}} \label{sec: limit (2)}

We prove limit (2) using~\cite[Theorem 12]{braides2020cut}, which states that $I_n \gamra I$ as $W_n \sqra W$, where
\begin{align}
	I_n(\nu) &= \int_0^1\int_0^1 W_n(x,y) \int_{\R^2} f(\lambda, \mu) \,d\nu_x(\lambda) \,d\nu_y(\mu) \,dx\,dy \,, \\
	I(\nu) &= \int_0^1\int_0^1 W(x,y) \int_{\R^2} f(\lambda, \mu) \,d\nu_x(\lambda) \,d\nu_y(\mu) \,dx\,dy \,.
\end{align}	
Limit (2) follows directly by using the integrand $f(s,t) = |s - t|$ instead of $f(s,t) = |s - t|^2$, which is the integrand that {was} used in \cite[Theorem 12]{braides2020cut}.

\begin{theorem}\cite[Theorem 12]{braides2020cut}
Let the functions $f \in C^b((0,1)^2)$ be bounded and continuous, and let $u \in L^\infty((0,1))$ and $\nu \in \mathcal{Y}((0,1),\R)$. Finally, let $\{W_n\}_{n=1}^\infty$ be a {dense sequence of graphs,}
with $W_n \sqra W\in \mathcal{W}_0$. We then have that
\begin{equation}
	I_n \gamra I \text{ as } n\ra \infty
\end{equation}
with respect to the narrow convergence of measures in $\mathcal{Y}((0,1),\R)$. 
\end{theorem}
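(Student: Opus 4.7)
The plan is to establish the stronger continuity statement that $I_n(\nu^n) \to I(\nu)$ for every sequence $\nu^n$ converging narrowly to $\nu$ in $\mathcal{Y}((0,1),\R)$. This implies the $\liminf$ inequality automatically, and a recovery sequence for the $\limsup$ part is then the trivial constant choice $\nu^n \equiv \nu$. I would split the error as
\begin{equation*}
    I_n(\nu^n) - I(\nu) = \underbrace{I_n(\nu^n) - I(\nu^n)}_{A_n} \; + \; \underbrace{I(\nu^n) - I(\nu)}_{B_n}
\end{equation*}
and show both pieces vanish. This decomposition isolates the two modes of convergence: $A_n$ sees only $W_n \sqra W$ (with $\nu^n$ frozen), while $B_n$ sees only $\nu^n \to \nu$ narrowly (with $W$ frozen).

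For $B_n$, I would apply the narrow convergence of product Young measures $\nu^n \otimes \nu^n \to \nu \otimes \nu$ from Section \ref{sec: ym}. Since $W \in L^1((0,1)^2)$ (as $W \in \mathcal{W}_0 \subset L^\infty$) can be approximated in $L^1$ by functions $W_\delta$ that are continuous in $(x,y)$, the integrand $W_\delta(x,y)f(\lambda,\mu)$ becomes a bounded continuous function on $(0,1)^2 \times \R^2$ to which narrow convergence directly applies. A standard $\epsilon/2$ argument then uses $\|f\|_\infty \cdot \|W - W_\delta\|_1 \to 0$ to bound the residual uniformly in $n$.

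The substantive work is the bound on $A_n = \int_0^1\int_0^1 (W_n - W)(x,y)\, g_n(x,y)\, dx\, dy$, where $g_n(x,y) := \int_{\R^2} f(\lambda,\mu)\, d\nu^n_x(\lambda)\, d\nu^n_y(\mu)$. The key step is to approximate $f$ uniformly on a relevant compact set by a finite tensor sum $f \approx \sum_{k=1}^K \phi_k(\lambda)\psi_k(\mu)$ via Stone--Weierstrass, so that $g_n \approx \sum_k \Phi^n_k(x)\Psi^n_k(y)$ with $\|\Phi^n_k\|_\infty \leq \|\phi_k\|_\infty$ and $\|\Psi^n_k\|_\infty \leq \|\psi_k\|_\infty$. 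For each product term the cut-norm identity \eqref{eq: cut norm 1} yields
\begin{equation*}
    \left| \int_0^1\int_0^1 (W_n - W)(x,y)\, \Phi^n_k(x)\, \Psi^n_k(y)\, dx\, dy \right| \leq \|\phi_k\|_\infty \|\psi_k\|_\infty \, \|W_n - W\|_\square \;\longrightarrow\; 0\,,
\end{equation*}
and the tensor approximation error is controlled by $\|f - \sum_k \phi_k \otimes \psi_k\|_\infty \cdot \|W_n - W\|_1$, which is uniformly bounded since $\|W_n\|_\infty, \|W\|_\infty \leq 1$ in the dense regime. Letting $n \to \infty$ and then the approximation parameter vanish finishes the argument.

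The main obstacle is the tensor approximation step, because Stone--Weierstrass produces products only on compact subsets of $\R^2$, whereas $f \in C^b$ may not decay at infinity. To handle this I would use tightness, which is automatic for narrowly convergent sequences of Young measures on bounded intervals: given $\eta > 0$ one can find a compact $K_\eta \subset \R^2$ such that $(\nu^n_x \otimes \nu^n_y)(K_\eta^c) < \eta$ uniformly in $n$, $x$, $y$, and then truncate $f$ to $K_\eta$ before applying the tensor expansion. The tail contribution is bounded by $2\,\|f\|_\infty \eta\, \|W_n - W\|_1$, which is small. With this truncation in place, the remaining steps are routine diagonal arguments.
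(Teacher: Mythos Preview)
The paper does not prove this theorem; it is quoted from \cite{braides2020cut}. Your decomposition $A_n + B_n$ and the tensor-approximation-plus-cut-norm treatment of $A_n$ match exactly the argument the paper gives for its own extension, Theorem~\ref{thm: main}. Two small differences are worth noting. For $B_n$ the paper argues more directly: narrow convergence of $\nu^n\otimes\nu^n$ gives $g_n \weakstarra g$ in $L^\infty((0,1)^2)$, and since $W\in L^1$ the pairing $\int W(g_n-g)\to 0$ follows immediately from the definition of weak-$*$ convergence, so no approximation of $W$ by continuous $W_\delta$ is needed. For $A_n$ the paper restricts to $\nu^n=\delta_{u_n}$ with $\|u_n\|_\infty\le 1$, so polynomial approximation on the fixed box $[-1,1]^2$ replaces your tightness-then-truncate step; the $\Gamma$-convergence for general Young measures is then recovered \emph{a posteriori} in Theorem~\ref{thm: dirichlet gamma convergence} via density of $\delta$-Young measures. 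One caution on your version: narrow relative compactness of Young measures yields only the integrated tightness $\sup_n\int_0^1 \nu^n_x(K_\eta^c)\,dx<\eta$, not the pointwise-in-$(x,y)$ uniform bound you stated; but the integrated form suffices here, since in the dense regime $\|W_n-W\|_\infty\le 2$ lets you bound the tail contribution to $A_n$ by $2\|f\|_\infty\cdot 2\int_0^1\nu^n_x(K_\eta^c)\,dx$.
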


\begin{corollary}
With the choice $f(s,t) = |s - t|$, we have 
\begin{equation}
     \TV^{W_n} \gamra \TV^{W} \text{ as } n \ra \infty 
\end{equation}    
with respect to narrow convergence of {$\nu^n$} to $\nu$ in $\mathcal{Y}((0,1),\R)$. 
\end{corollary}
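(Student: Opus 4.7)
The plan is to deduce the corollary from the preceding theorem by handling two technical issues: the integrand $f(s,t) = |s-t|$ is unbounded on $\R^2$, and $\TV^{W_n}$ carries the hard constraint $\nu \in \mathcal{Y}^b$. For the unboundedness, I would replace $f$ by a bounded continuous surrogate $\tilde{f}(s,t) = |\tau(s) - \tau(t)|$, where $\tau: \R \ra [-1,1]$ is a continuous truncation to $[-1,1]$; then $\tilde{f} \in C^b(\R^2)$ and $\tilde{f} = |s-t|$ on $\{-1,1\}^2$. Applying the cited theorem to $\tilde{f}$ gives $\tilde{I}_n \gamra \tilde{I}$ with respect to narrow convergence, and since $\tilde{f}$ agrees with $|s-t|$ on the support of any $\nu \in \mathcal{Y}^b$, on $\mathcal{Y}^b$ the functionals $\tilde{I}_n$ and $\tilde{I}$ coincide with $\TV^{W_n}$ and $\TV^W$ up to the normalization conventions visible in~\eqref{eq: graph TV} and~\eqref{eq: graphon TV}.

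For the $\liminf$ inequality, take $\nu^n \ra \nu$ narrowly; we may assume along a subsequence that $\liminf \TV^{W_n}(\nu^n) < \infty$, so each $\nu^n \in \mathcal{Y}^b$. To conclude $\nu \in \mathcal{Y}^b$, I would use that $\mathcal{Y}^b$ is narrowly closed: for any $g \in C^b(\R)$ that vanishes on $\{-1,1\}$ one has $\int g \, d\nu^n_x \equiv 0$, and narrow convergence passes this identity to the limit in the weak-$*$ topology of $L^\infty((0,1))$, forcing $\operatorname{supp}(\nu_x) \subseteq \{-1,1\}$ almost everywhere via a countable separating family of such test functions. The $\liminf$ inequality from the cited theorem then gives $\liminf \TV^{W_n}(\nu^n) \geq \TV^W(\nu)$.

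For the $\limsup$ inequality, the main obstacle is producing a recovery sequence that remains inside $\mathcal{Y}^b$, since otherwise $\TV^{W_n}(\nu^n) \equiv +\infty$. Given $\nu \in \mathcal{Y}^b$, I would first approximate $\nu$ narrowly by piecewise-constant two-point Young measures $\nu^{(k)} \in \mathcal{Y}^b$ of the form $\nu^{(k)}_x = p_k(x)\delta_1 + (1-p_k(x))\delta_{-1}$ with $p_k$ a step function on $(0,1)$, chosen so that $\TV^W(\nu^{(k)}) \ra \TV^W(\nu)$. For each fixed $\nu^{(k)}$, the double integrand $G^{(k)}(x,y) = \int_{\R^2} \tilde{f} \, d\nu^{(k)}_x(\lambda) \, d\nu^{(k)}_y(\mu)$ is a finite linear combination of indicators of product rectangles $A_i \times B_j$, and cut-norm convergence $W_n \sqra W$ via the tensor-product formulation~\eqref{eq: cut norm 1} therefore yields $\TV^{W_n}(\nu^{(k)}) \ra \TV^W(\nu^{(k)})$ as $n \ra \infty$. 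A standard diagonal extraction then produces a recovery sequence $\nu^n \in \mathcal{Y}^b$ with $\nu^n \ra \nu$ narrowly and $\limsup \TV^{W_n}(\nu^n) \leq \TV^W(\nu)$.

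The hardest step is the last: securing the recovery sequence in $\mathcal{Y}^b$. The step-function density of the $p_k$ is crucial, because it reduces the problem to testing cut-norm convergence against sums of indicators of product rectangles, which is exactly what the cut norm is designed to control---a general bounded measurable $G(x,y)$ is not handled by $\sqra$ convergence alone, so the piecewise-constant reduction cannot be skipped.
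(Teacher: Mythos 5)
Your proposal is far more careful than what the paper actually does: the paper offers no proof at all, asserting only that the corollary ``follows directly'' from the cited theorem by substituting $f(s,t)=|s-t|$ for $|s-t|^2$. You correctly identify the two issues that this glosses over --- the unboundedness of $|s-t|$ on $\R^2$ (the theorem requires $f\in C^b$) and the $+\infty$ extension of the TV functionals off the binary class --- and your fixes for the first issue (truncation to $\tilde f(s,t)=|\tau(s)-\tau(t)|$, which agrees with $|s-t|$ on $\{-1,1\}^2$) and for the $\liminf$ inequality (narrow closedness of $\mathcal{Y}^b$ via bounded continuous test functions vanishing on $\{-1,1\}$) are both correct. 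You are also right not to lose sleep over the factor of $2$: with $\lambda,\mu\in\{-1,1\}$ one has $|\lambda-\mu|^2=2|\lambda-\mu|$, and the prefactor in \eqref{eq: graphon TV} is there to match the $\ep\to 0$ limit of the GL functional, so the corollary as literally stated is off by a constant that is an artifact of the paper's conventions, not of your argument.

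The one genuine soft spot is in your $\limsup$ step. As defined in \eqref{eq: graph TV}, $\TV^{W_n}$ is finite only on $\delta$-Young measures of \emph{binary graph functions} $u\in\mathcal{V}^n_b$ (functions constant on the grid intervals $I_i$ with values in $\{\pm 1\}$), not on all of $\mathcal{Y}^b$. Your intermediate two-point Young measures $\nu^{(k)}_x=p_k(x)\delta_1+(1-p_k(x))\delta_{-1}$ are in $\mathcal{Y}^b$ but are not $\delta$-Young measures, so $\TV^{W_n}(\nu^{(k)})=+\infty$ under that definition, and the claimed convergence $\TV^{W_n}(\nu^{(k)})\to\TV^W(\nu^{(k)})$ does not hold for the prelimit functionals; the diagonal extraction therefore does not yet produce an admissible recovery sequence. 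What is missing is one more approximation layer: for each fixed $k$, realize $\nu^{(k)}$ as the narrow limit of $\delta$-Young measures of rapidly oscillating functions $u_{n,k}\in\mathcal{V}^n_b$ taking the value $+1$ on a proportion $p_k$ of the subintervals of each step of $p_k$, for which the rectangle-indicator/cut-norm argument you describe does give $\TV^{W_n}(u_{n,k})\to 2\int\!\!\int W\,G^{(k)}$, and only then diagonalize over $k$ and $n$. This is exactly the careful construction in the proof of Theorem 12 of Braides et al.\ that the paper itself alludes to elsewhere (in the proof of Theorem \ref{thm: dirichlet gamma convergence}); your reduction to indicators of product rectangles is the right mechanism, but it must be applied to genuine binary graph functions rather than to the two-point measures directly.
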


\begin{proposition}[Compactness]
Let $W_n \sqra W$, and let {$\{u_n\} \in \calV^n$} be a sequence of graph functions such that $ \TV^{W_n}(u_n) < M$ for all {$n \in \N$} and some {constant} $M > 0$. {There then} exists a convergence subsequence {$\{u_{n_k}\}$} {such} that the {sequence $\{\delta_{u_{n_k}(x)}\}_{x\in (0,1)}$ of corresponding $\delta$-Young measures} converges to a limiting Young measure $\nu$. 
\end{proposition}

\begin{proof}
    Because {the sequence} $\{u_n\}$ consists of graph functions, it {is} bounded in $L^1((0,1))$. {We then} apply Lemma \ref{prohorov's theorem} ({i.e.,} Prohorov's Theorem) to obtain the {desired} result. 
\end{proof}


\section{\texorpdfstring{Sequential limit: $n$ then $\ep$ (i.e., $\GL^{W_n}_\ep \gamra \GL^W_\ep \gamra \TV^W$)}{Sequential limit: n then ep}  }   \label{sec: sequential n then ep part n} 

In this section, we prove two novel limits. Our main result is limit (3), which extends \cite[Theorem 12]{braides2020cut}. We prove limit (4) only for $L^\infty$ graphons, and we discuss a scaling issue for $L^p$ graphons.

\subsection{\texorpdfstring{Limit (3): $\GL^{W_n}_\ep \gamra \GL^W_\ep$}{Limit (3)}}
\label{sec: limit (3)}

There are three key steps in our proof of limit (3).

We first show that the GL-minimization problem is well-posed in the space $L^\infty((0,1);[-1,1])$. That is, we show that $\arg\min_{L^\infty((0,1))} \GL^W_\ep \subset L^\infty((0,1);[-1,1])$. It then suffices to consider functions $u, u_n \in L^\infty((0,1);[-1,1])$. 

We then show that the graph Dirichlet energy $\Gamma$-converges to the graphon Dirichlet energy. Our proof includes ideas from the proof of~\cite[Lemma 11]{braides2020cut} for the $\Gamma$-convergence of graph-cut functionals, which are similar to Dirichlet energies, except that their domains consist of only
functions with finite-cardinality codomains.

Finally, we show that adding a double-well potential does not affect the $\Gamma$-convergence. {Because each GL functional is a sum of a Dirichlet energy and a double-well potential, this fact} yields $\Gamma$-convergence of the GL functionals.


\subsubsection{\texorpdfstring{Well-posedness {of $\GL^W_\ep$} in $L^\infty((0,1); [-1,1])$}{Well-posedness}}\label{sec: well-posedness} 

We show by contradiction that the function $u$ is bounded {in magnitude} by $M = 1$. Suppose that $M >1$, and let $u^M$ be the truncation of $u$ at $\pm M$. That is, 
\begin{equation} 
	u^M(x) = \begin{cases}
		M &\text{ if } \,\, u(x) > M \\
		u(x) &\text{ if } \,\, |u(x)| \leq M	\\
		-M &\text{ if } \,\, u(x) < -M \,.
\end{cases}
\end{equation}
We show that $\GL^W_\ep(u) \geq \GL^W_\ep(u^M)$ when $W$ is any graphon. This implies that the minimizer of $\GL^W_\ep$ is in $L^\infty((0,1); [-1,1])$.

We separately show the well-posedness in $L^\infty((0,1);[-1,1])$ of the Dirichlet energy and the double-well potential. Because the double-well potential $\Phi(s) = (s^2 - 1)^2$ increases as $s > 1$ increases and decreases as $s < -1$ decreases, we know that 
\begin{equation}
	\int_0^1\Phi(u) \geq \int_0^1 \Phi(u^M) 
\end{equation}	
{for $M >1$.} For the Dirichlet energy, let 
\begin{equation}
	S_M = \{x: u(x) \geq M\}
\end{equation}	
be the set of points $x \in (0,1)$ where $u^M$ and $u$ differ. To simplify our notation in this discussion, let
\begin{align*}
    g(x,y) &= |u(x) - u(y)|^2\,, \\
    g^M(x,y) &= |u^M(x) - u^M(y)|^2\,.
\end{align*}
We want to show that
\begin{align*}
    D^W(u) - D^W(u^M) &= \int_0^1 \int_0^1 W(x,y) \left(g(x,y) - g^M(x,y)\right) \,dx \, dy \\
    &= \Bigg(\int_{S_M} \int_{S_M^c} + \int_{S_M^c} \int_{S_M} + \int_{S_M} \int_{S_M} \Bigg) W(x,y)\left(g(x,y) - g^M(x,y)\right) \,dx \, dy 
\end{align*}
is nonnegative. The integrals over $S_M \times S_M^c$ and $S_M^c \times S_M$ are equal because the integrand is symmetric. Both of these integrals are equal to
\begin{align*}
    \int_{S_M} \int_{S_M^c} W(x,y) \Big(|u(x) - u(y)|^2 - |M - u(y)|^2\Big)  \,dx \,dy\,.
\end{align*}
Note that $|u(x) - u(y)|^2 - |M - u(y)|^2 \geq 0$ because the function $f(s) = |s - c|^2$ increases as $s$ increases for $s > c$. Consequently, the integral over $S_M \times S_M^c$ (and hence also the integral over $S_M^c \times S_M$) is nonnegative.
The integral over $S_M \times S_M$ is
\begin{align*}
  &  \int_{S_M} \int_{S_M} W(x,y) (g(x,y)- g^M(x,y)) \, dx\,dy \\
    &\qquad = \int_{S_M} \int_{S_M} W(x,y) \Big(|u(x) - u(y)|^2 - |M - M|^2\Big)  \,dx\,dy \\
    &\qquad = \int_{S_M} \int_{S_M} W(x,y) |u(x) - u(y)|^2 \,dx\,dy \geq 0 \,,
\end{align*}
where we use the nonnegativity of the integrand in the last step to obtain nonnegativity of the integral.

We conclude that $\int_0^1 \Phi(u(x)) \,dx \geq \int_0^1 \Phi(u^M(x)) \,dx$ and $D^W(u(x)) \geq D^W(u^W(x))$. Consequently, $\GL^W_\ep(u(x)) \geq \GL^W_\ep(u^M(x))$. Because $M > 1$ is arbitrary, it follows that the GL minimizer takes values in $[-1,1]$.


\subsubsection{{Proof} of limit (3)}

{The proof of limit (3) requires two {key} steps. First, we state and prove Lemma \ref{lemma: limit 3 pointwise}, which establishes $\Gamma$-convergence of the graph Dirichlet energy to the graphon Dirichlet energy in the large-graph limit. Second, we show {that} $\Gamma$-convergence holds even after adding a double-well potential to the graph and graphon Dirichlet energies. {We thereby obtain} $\Gamma$-convergence of the graph GL functional to the graphon GL functional in the large-graph limit.}

Theorem \ref{thm: dirichlet gamma convergence}, which guarantees $\Gamma$-convergence of the graph Dirichlet energy \eqref{eq: graph dirichlet} to the graphon Dirichlet energy \eqref{eq: graphon dirichlet}, resembles~\cite[Theorem 12]{braides2020cut}, but it extends it in two important ways. First, it extends the domain from finite-codomain $u_n$ to $u_n \in L^\infty((0,1))$. Second, it extends graphons from $W \in L^\infty((0,1)^2)$ to $W \in L^p((0,1)^2)$. 

Lemma \ref{lemma: limit 3 pointwise}
provides the foundation for the proof of Theorem \ref{thm: dirichlet gamma convergence}, which we use in turn to prove limit (3) (see Corollary \ref{cor: gl gamma convergence}). 

\begin{lemma}\label{lemma: limit 3 pointwise} 
Let $W_n \in L^p((0,1)^2)$ for $p \geq 1$, and suppose that $W_n \sqra W$. Let $f \in C^b(\R^2)$ be a continuous and bounded function, and define the functional $I_n : \calY((0,1),\mathbb{R}) \to [0,\infty)$ as
\begin{equation*}
    I_n(\nu^n) = \int_0^1\int_0^1 W_n(x,y) \int_{\R^2} f(\lambda,\mu) \,d\nu^n_x (\lambda) \,d\nu^n_y(\mu) \,dx \, dy \,.
\end{equation*}
Let {$\{u_n\} \in \mathcal{V}^n$} be a sequence of graph functions such that $\sup_n \|u_n\|_\infty < \infty$. We then have that the {sequence $\{ \nu_x^n = \delta_{u_n(x)}\}_{x\in (0,1)} \subset \mathcal{Y}((0,1),\mathbb{R})$ of corresponding Young measures is} precompact (i.e., its closure is compact) in the narrow topology. Moreover, any subsequence $\{ \nu_x^{n_k} \}$ of $\{\nu^n_x\}$ with a corresponding limit point $\nu$ satisfies
\begin{equation}
	I_n(\nu^{n_k}) \rightarrow I(\nu)
\end{equation}	
pointwise, where the functional $I : \mathcal{Y}((0,1),\mathbb{R}) \to [0,\infty)$ is 
\begin{equation*}
    I(\nu) = \int_{(0,1)^2} W(x,y) \int_{\R^2} f(\lambda,\mu) \,d\nu_x(\lambda) \,d\nu_y(\mu)  \,dx \, dy \,.
\end{equation*}
\end{lemma}

\begin{proof}
To prove this result, we use the triangle inequality to break $|I_n(\nu^n) - I(\nu)|$ into two parts. {One part} converges to $0$ due to {the} weak convergence of $g_n$ to $g$, and the other part converges to $0$ due to the cut convergence $W_n \sqra W$.

By {Prohorov's Theorem ({see} Lemma \ref{prohorov's theorem})}, any sequence $\{\nu^n\}$ of $\delta$-Young measures that corresponds to a sequence $\{u_n\} \in L^\infty((0,1))$ has a subsequence $\{\nu^{n_k}\}$ that converges narrowly to a Young measure $\nu$. For the remainder of our paper, we simplify our notation by using $\{\nu^n\}$ to denote the subsequence $\{\nu^{n_k}\}$.
We denote the innermost integrals of the functionals by 
\begin{align*}
    g_n(x,y) &= \int_{\R^2} f(\lambda,\mu) \,d\nu^n_x (\lambda) \,d\nu^n_y(\mu) = f(u_n(x),u_n(y)) \,, \\
    g(x,y) &= \int_{\R^2} f(\lambda,\mu) \,d\nu_x (\lambda) \,d\nu_y(\mu) \,,
\end{align*}
and we can then write $I_n(\nu^n) = \int_{(0,1)^2}W_n(x,y) g_n(x,y) \,dx \, dy$ and $I(\nu) = \int_{(0,1)^2} W(x,y) g(x,y) \, dx \, dy$. 
The triangle inequality gives
\begin{align*}
    |I_n(\nu^n) - I(\nu)| &= \left|\int_0^1\int_0^1 W_ng_n - Wg \,dx\,dy\right| \\
	    &\leq \left| \int_0^1\int_0^1 (W_n - W)g_n \,dx\,dy\right| + \left| \int_0^1\int_0^1 W(g_n - g)  \,dx\,dy\right| \\
	    &\equiv ({\mathrm{I}}) + ({\mathrm{II}}) \,. 
\end{align*}

We show that {${(\mathrm{II})} \ra 0$} using weak convergence of $g_n$ to $g$. The narrow convergence $\nu^n \weakstarra \nu$ implies that {$g_n$ converges in the weak-* topology to $g$} in $L^{\infty}((0,1)^2)$ by {Lemma}~\ref{lem: narrow convergence of products}. That is,
\begin{equation*}
    \int_{\mathbb{R}^2} f(\mu,\lambda) \, d \nu_x^n(\mu) \, d \nu_y^n(\lambda) \weakstarra \int_{\mathbb{R}^2} f(\mu,\lambda) \, d \nu_x(\mu) \, d \nu_y(\lambda) \, \text{ in }  L^{\infty}((0,1)^2,\mathbb{R})\,. 
\end{equation*} 
Furthermore, $W \in L^p((0,1)^2) \subset L^1((0,1)^2)$, so the definition of weak convergence in $L^\infty$ implies that $(\mathrm{II}) \ra 0$.

We now show that {$(\mathrm{I}) \ra 0$} because $W_n \sqra W$. {To do this}, we approximate $g_n$ by polynomials to obtain a sum of terms that resembles the definition of cut convergence. This yields an expression that has the same form as the right-hand side of equation~\eqref{eq: cut norm 2}.

Let {$\{P_k\}_{k=1}^\infty : \R^2 \to \R$} be a sequence of polynomial functions such that
\begin{equation*}
    |f(a,b) - P_k(a,b)| \to 0 \,\text{ as }\, k \to \infty \,\text{ uniformly on }\, [-1,1]^2 \,. 
\end{equation*}
Such a polynomial exists because the set of polynomials is dense in $L^\infty([-1,1])$. Using the triangle inequality, we obtain
\begin{equation*}
    \begin{split}
        {(\mathrm{I})} &\leq \left| \int_{(0,1)^2} (W_n - W)\big( f(u_n(x),u_n(y)) - P_k(u_n(x),u_n(y)) \big) \,dy\, dx \right| \\
            &\qquad + \left| \int_{(0,1)^2} (W_n - W) P_k(u_n(x),u_n(y)) \,dy\,dx \right| \\
        &\leq \sup_{(a,b) \in [-1,1]^2 } |f(a,b) - P_k(a,b)| \, \left(  \| W_n \|_{L^1((0,1)^2)} + \| W \|_{L^1((0,1)^2)} \right)  \\
        &\qquad + \left| \int_{(0,1)^2} (W_n-W) P_k(u_n(x),u_n(y)) \,dy\,dx \right| \\
        &\leq C \sup_{(a,b) \in [-1,1]^2 } |f(a,b) - P_k(a,b)| \\
        &\qquad + \left| \int_{(0,1)^2} (W_n - W) P_k(u_n(x),u_n(y)) \,dy\,dx \right|
    \end{split}
\end{equation*}
because both the graphon $W$ and the sequence $\{ W_n \}$ are bounded in $L^1((0,1)^2)$.

For any polynomial $P_k(a,b) = \sum_{i,j = 1}^k \alpha_{ij} a^i b^j$, we use the boundedness of $\|u_n\|_\infty$ 
(specifically, we use $\| u_n \|_\infty \leq 1$, which we showed in Section \ref{sec: well-posedness}) to obtain
\begin{align*}
    \left| \int_0^1\int_0^1 (W_n - W) P_k(u_n(x),u_n(y)) \, dx\,dy \right| &= \left| \int_0^1\int_0^1 (W_n - W) \sum_{i,j=1}^k  \alpha_{ij} u_n^i(x) u_n^j(y) \,dx\,dy \right|  \\
    &= \left| \sum_{i,j = 1}^k \alpha_{ij} \int_0^1\int_0^1 (W_n - W) u_n^i(x) u_n^j(y) \,dx\,dy \right| \\
    &\leq k \, {{\max_{i,j}}\left\{\alpha_{ij} \cdot \left|\int_0^1\int_0^1 (W_n - W) u_n^i(x) u_n^j(y) \,dx\,dy \right|\right\}} \\
    &\leq C(P_k) \|W_n - W\|_\square \,.
\end{align*}

In summary,
\begin{equation*}
    {(\mathrm{I})} \leq C \left(  \sup_{(a,b) \in [-1,1]^2 } |f(a,b) - P_k(a,b)| + \|W_n - W\|_\square \right) \,,
\end{equation*}
where the constant $C$ is independent of $n$. Choosing $k$ sufficiently large and then letting $n \to \infty$ implies that {$(\mathrm{I}) \to 0$}. 
\end{proof}

\begin{corollary}\label{cor: dirichlet convergence} 
Let $W_n \sqra W$, and let {$\{u_n\} \in \mathcal{V}^n$} be a sequence of graph functions such that $\sup_n \|u_n\|_\infty < \infty$. We then have that the {sequence $\{\nu^n_x\} \subset \mathcal{Y}((0,1),\R)$ of corresponding Young measures} is precompact in the narrow topology. Moreover, any subsequence $\{\nu^n_x\}$ and any limit point $\nu$ {satisfy}
 \begin{equation} \label{cor2}
    	D^{W_n}(\nu^n) \ra D^W(\nu) 
\end{equation}	
pointwise, where $D^{W_n}$ and $D^W$ are defined {in \eqref{eq: graph dirichlet} and \eqref{eq: graphon dirichlet}, respectively.}
\end{corollary}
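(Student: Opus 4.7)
The plan is to deduce Corollary \ref{cor: dirichlet convergence} from Theorem \ref{thm: main} by taking the integrand to be $f(\lambda,\mu) = |\lambda - \mu|^2$, which formally turns $I_n$ into $D^{W_n}$ and $I$ into $D^W$. The only obstruction is that $f(\lambda,\mu) = |\lambda - \mu|^2$ is not in $C^b(\R^2)$, so Theorem \ref{thm: main} does not apply directly. I would handle this by truncation, exploiting the uniform bound $M := \sup_n \|u_n\|_\infty < \infty$ in the hypothesis.

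First I would fix the truncation. Choose a cutoff $\chi \in C^b(\R^2)$ with $\chi \equiv 1$ on $[-M,M]^2$ and $\chi \equiv 0$ outside $[-M-1,M+1]^2$, and set $\tilde f(\lambda,\mu) := \chi(\lambda,\mu)\,|\lambda-\mu|^2$. Then $\tilde f \in C^b(\R^2)$, and since each $u_n$ takes values in $[-M,M]$, the corresponding Young measures $\nu^n_x = \delta_{u_n(x)}$ are supported in $[-M,M]$, so $\tilde f(u_n(x),u_n(y)) = |u_n(x)-u_n(y)|^2$ pointwise. Consequently the functional $\tilde I_n$ built from $\tilde f$ coincides identically with $D^{W_n}$ on the Young measures $\nu^n$. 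Applying Theorem \ref{thm: main} with $\tilde f$ yields precompactness of $\{\nu^n\}$ in the narrow topology and, along any narrowly convergent subsequence $\nu^{n_k} \to \nu$, pointwise convergence $\tilde I_{n_k}(\nu^{n_k}) \to \tilde I(\nu)$.

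Next I would show the limit Young measure $\nu$ is also supported in $[-M,M]$ almost everywhere, so that $\tilde I(\nu) = D^W(\nu)$. The argument: for any $\phi \in C^b(\R)$ with $\phi \equiv 0$ on $[-M,M]$, the map $x \mapsto \int_\R \phi\,d\nu^n_x = \phi(u_n(x))$ is identically zero. By narrow convergence, the weak-$\star$ limit in $L^\infty((0,1))$ of this sequence equals $x \mapsto \int_\R \phi\,d\nu_x$, which must therefore vanish for a.e.\ $x$. Taking a countable family of such $\phi$ separating points of $\R \setminus [-M,M]$ shows $\mathrm{supp}\,\nu_x \subseteq [-M,M]$ for a.e.\ $x$, whence $\tilde f$ agrees with $|\lambda-\mu|^2$ on $\mathrm{supp}(\nu_x \otimes \nu_y)$ and the corresponding integrals against $W$ coincide. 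Combining the two identifications gives $D^{W_{n_k}}(\nu^{n_k}) \to D^W(\nu)$.

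The main (mild) obstacle is precisely this support-containment step: one needs narrow convergence of the $\nu^n$ to force the limit $\nu$ to inherit the uniform bound from the $u_n$. Everything else is a direct translation of Theorem \ref{thm: main}, since the truncation is harmless on the relevant sets. Once both $\tilde I_n(\nu^n) = D^{W_n}(\nu^n)$ and $\tilde I(\nu) = D^W(\nu)$ are established, the desired convergence \eqref{cor2} follows immediately.
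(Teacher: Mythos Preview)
Your proposal is correct and follows the same approach as the paper, which simply says ``Choose $f(s,t) = |s - t|^2$ in Theorem \ref{thm: main}.'' You have in fact been more careful than the paper: the hypothesis of Theorem \ref{thm: main} requires $f \in C^b(\R^2)$, which $|s-t|^2$ is not, and the paper's one-line proof does not comment on this. Your truncation argument, together with the observation that the limiting Young measure inherits support in $[-M,M]$ from the uniform bound on $u_n$, is exactly the right way to fill this gap; the paper implicitly relies on the same fact (the proof of Theorem \ref{thm: main} only uses values of $f$ on the bounded range of the $u_n$) but does not spell it out.
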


\begin{proof}
    Choose $f(s,t) = |s - t|^2$ in Theorem \ref{lemma: limit 3 pointwise}. 
\end{proof}

Corollary \ref{cor: dirichlet convergence} extends~\cite[Lemma 11]{braides2020cut} by allowing $\nu$ to be any Young measure and allowing {$\nu^n = \{\delta_{u_n(x)}\}_{x\in (0,1)}$ to be any $\delta$-Young measure {that corresponds} to some} $u_n \in L^\infty((0,1))$. In~\cite[Lemma 11]{braides2020cut}, $\nu^n$ must have support on a finite set of values, where the number of values is independent of $n$. The closure of the set of such Young measures is a strict subset of $\calY((0,1),\R)$. On the contrary, we allow $\nu^n$ to be any $\delta$-Young measure. As we note {below} in the proof of Theorem \ref{thm: dirichlet gamma convergence}, the closure of the set of $\delta$-Young measures in $\calY((0,1),\R)$ is the set $\calY((0,1),\R)$. See \cite[Proposition 8]{valadier1990young} for a proof of this fact.

\begin{theorem}\label{thm: dirichlet gamma convergence} 
Under the same assumptions as in Corollary \ref{cor: dirichlet convergence}, we have
\begin{equation}
	D^{W_n} \gamra {D^W} \,,
\end{equation}	 
where we take $\Gamma$-convergence with respect to narrow convergence of Young measures. 
\end{theorem}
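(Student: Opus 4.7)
The plan is to verify the two conditions of $\Gamma$-convergence from Definition \ref{def: gamma convergence}, with Corollary \ref{cor: dirichlet convergence} doing most of the work; only the existence of a recovery sequence requires a genuinely new construction.

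For the liminf inequality, consider any sequence $\nu^n \to \nu$ narrowly. If $\nu^n$ does not lie in the effective domain of $D^{W_n}$ (i.e., does not come from some $u_n \in \calV^n$), then $D^{W_n}(\nu^n) = +\infty$ and the inequality is automatic. Otherwise $\nu^n = \delta_{u_n}$, and applying the truncation argument of Section \ref{sec: well-posedness} to $D^{W_n}$ alone shows $D^{W_n}(u_n) \geq D^{W_n}(u_n^M)$, so up to truncation we may assume a uniform bound $\|u_n\|_\infty \leq M$. Corollary \ref{cor: dirichlet convergence} then yields $D^{W_n}(\nu^n) \to D^W(\nu)$, whence $\liminf_n D^{W_n}(\nu^n) = D^W(\nu)$.

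For the limsup (recovery-sequence) condition, given $\nu \in \calY((0,1),\R)$ supported in $[-1,1]$, the goal is to construct $u_n \in \calV^n$ with $\delta_{u_n} \to \nu$ narrowly; once this is done, Corollary \ref{cor: dirichlet convergence} forces $D^{W_n}(\delta_{u_n}) \to D^W(\nu)$ and hence $\limsup_n D^{W_n}(\delta_{u_n}) = D^W(\nu)$. I would build the recovery sequence via a two-scale approximation: pick auxiliary scales $k_n \to \infty$ with $m_n := n/k_n \to \infty$ (say $k_n = \lfloor \sqrt n \rfloor$), partition $(0,1)$ into $k_n$ chunks $J_\ell$ of length $1/k_n$, and form the chunkwise-averaged probability measure $\bar\nu^{(\ell)} := k_n \int_{J_\ell} \nu_x \, dx$ on $[-1,1]$. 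Each chunk contains exactly $m_n$ of the finer intervals $I_i$, and on the $j$-th such interval ($j = 1,\dots,m_n$) I set $u_n$ to the quantile $F_{\bar\nu^{(\ell)}}^{-1}(j/(m_n + 1))$, yielding $u_n \in \calV^n$ with $\|u_n\|_\infty \leq 1$.

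Narrow convergence $\delta_{u_n} \to \nu$ would then be verified by testing against $f \in C^b(\R)$ and $g \in L^1((0,1))$, reducing to showing that $\int_0^1 g(x) f(u_n(x)) \, dx \to \int_0^1 g(x) \int_\R f \, d\nu_x \, dx$. This follows from (i) a chunkwise Riemann-sum estimate showing that the $m_n$ quantile samples compute $\int f \, d\bar\nu^{(\ell)}$ up to $o(1)$ as $m_n \to \infty$ (using that $F_{\bar\nu^{(\ell)}}^{-1}$ is monotone), combined with (ii) a Lebesgue-differentiation argument giving $\bar\nu^{(\ell)} \to \nu_x$ narrowly for a.e.\ $x$ as $k_n \to \infty$; general $g \in L^1$ is handled by approximation by continuous functions. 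The main obstacle is the two-scale balancing: $k_n$ must grow so the chunkwise averages resolve $\nu$, yet $m_n = n/k_n$ must also grow so the quantile samples resolve each $\bar\nu^{(\ell)}$, and some care with a countable dense subset of $C([-1,1])$ is needed to obtain simultaneous Lebesgue differentiation for all test functions. Once narrow convergence is established, Corollary \ref{cor: dirichlet convergence} delivers $D^{W_n}(\delta_{u_n}) \to D^W(\nu)$ and completes the proof.
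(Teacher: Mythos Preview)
Your proof is correct but takes a genuinely different route from the paper, particularly for the recovery sequence. For the liminf inequality, both you and the paper invoke Corollary \ref{cor: dirichlet convergence}; you are actually more careful, explicitly handling the case of possibly unbounded $u_n$ via truncation (and this works: truncation is pushforward by a continuous map, so $\delta_{u_n^M} \to \nu^M$ narrowly, Corollary \ref{cor: dirichlet convergence} gives $D^{W_n}(u_n^M) \to D^W(\nu^M)$, and monotone convergence sends $D^W(\nu^M) \uparrow D^W(\nu)$ as $M \to \infty$). The paper simply asserts that the pointwise convergence from the corollary suffices.

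For the limsup, the paper dispatches the recovery sequence in one line by citing the density of $\delta$-Young measures in $\mathcal{Y}((0,1),\R)$ from \cite{valadier1990young}, explicitly noting that this is more direct than the careful finite-valued construction in \cite{braides2020cut}. Your two-scale quantile construction is in the spirit of the latter: it is self-contained and directly produces $u_n \in \mathcal{V}^n$, whereas the density citation yields a measurable function that still needs to be discretized onto the $n$-step partition (a routine but unstated step in the paper). Your approach buys explicitness and avoids an external reference; the paper's buys brevity. Both are valid, and your construction is a reasonable way to make the density result concrete.
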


\begin{proof}
To prove $\Gamma$-convergence, it suffices to prove the following two statements.
\begin{enumerate}[(i)]
    \item For every $\{\nu^n\}_{n\in \N}$ such that $\nu^n \ra \nu$ narrowly, we have $D(\nu) \leq \liminf_{n} D^{W_n}(\nu^n)$. 
    \item There exists a sequence $\{\nu^n\}_{n\in \N}$ that converges narrowly to $\nu$ with 
    \\$D(\nu) \geq \limsup_{n} D^{W_n}(\nu^n)$. 
\end{enumerate}

Statement (i) follows from Corollary \ref{cor: dirichlet convergence} because pointwise convergence holds for all $\nu^n \ra \nu$ narrowly. To prove statement (ii), it suffices to show that there exists a sequence {$\{\nu^n\} = \{\delta_{u_n(x)}\}$} that converges narrowly to some $\nu$ for any $\nu \in \mathcal{Y}((0,1))$. Once we obtain this sequence, Corollary \ref{cor: dirichlet convergence} yields inequality (ii). The existence of such a sequence {is called the ``fundamental theorem of Young measures"}, and it follows by the denseness {(with respect to narrow convergence)} of Young measures that correspond to measurable functions in the set of all Young measures~\cite[Proposition 8]{valadier1990young}. 
\end{proof}

\begin{remark}\label{direct}
Our proof of statement (ii) is more direct than the analogous proof by Braides et al.~\cite[Theorem 12]{braides2020cut}, who constructed $\nu^n$ carefully from a sequence of finite-valued functions $u_n$.
\end{remark}

Proposition \ref{prop: double-well convergence} {guarantees} that a sequence of double-well potentials on graph functions converges to a double-well potential on a Young measure. 
In Corollary \ref{cor: gl gamma convergence}, we use this result in concert with Theorem \ref{thm: dirichlet gamma convergence} to show that $\GL^{W_n}_\ep \gamra \GL^W_\ep$. 

\begin{proposition} \label{prop: double-well convergence}
    Let {$\{u_n\} \in L^\infty((0,1))$} be a sequence of functions that are constant on the intervals $I_i$
    for $i \in [n]$, and let $\Phi: \R \ra \R$ be continuous and bounded. Let $\nu$ be the Young measure that is the narrow limit of the sequence {$\{\nu^n\}$} of Young measures of the form $\nu^n_x = \delta_{u_n(x)}$.
    We have that
\begin{equation}
    \int_0^1 \Phi(u_n(x)) \,dx \ra \int_0^1 \int_{\R} \Phi(\lambda) \, d\nu(\lambda) \,dx  
 \end{equation}   
    pointwise. 
\end{proposition}

\begin{proof}
    The limit $\nu$ exists by Lemma \ref{prohorov's theorem} ({i.e.,} Prohorov's Theorem). 
    By the definition of narrow convergence, {the fact that $\nu^n \weakstarra \nu$ implies that} 
 \begin{equation} \label{narrow conv phi_n}
    \Phi(u_n(x)) = \int_\R \Phi(\lambda) \,d\nu^n_x(\lambda) \weakstarra \int_{\R} \Phi(\lambda) \,d\nu_x(\lambda) \; \text{ in }L^\infty((0,1)) \,.  
\end{equation}    
For any test function $\psi(x) \in L^1((0,1))$, the definition of weak-$*$ convergence implies that the sequence of integrals $\int_0^1 \Phi(u_n(x)) \psi(x) \,dx = \int_0^1 \int_\R \Phi(\lambda) \, d\nu^n_x(\lambda) \psi(x) \, dx$ converges to 
\\
$\int_0^1 \int_\R \Phi(\lambda) d\nu_x(\lambda) \psi(x) \,dx$ as $n\ra\infty$. Using the test function $\psi \equiv 1$ yields the desired result.

\end{proof}

The $\Gamma$-convergence of the graph GL functional \eqref{eq: graph GL} {to the graphon limit as $n \to \infty$} follows from the $\Gamma$-convergence of the graph Dirichlet energy \eqref{eq: graph dirichlet} (see Corollary \ref{cor: dirichlet convergence}) and the pointwise convergence of the double-well potential (see Proposition \ref{prop: double-well convergence}). We state this convergence result formally in the next corollary.

\begin{corollary} \label{cor: gl gamma convergence}
    Under the same assumptions as in Corollary \ref{cor: dirichlet convergence}, we have
 \begin{equation} 
    \GL^{W_n}_\ep \gamra \GL^W_\ep
  \end{equation}   
    as $n \ra \infty$ with respect to narrow convergence of the Young measures.
\end{corollary}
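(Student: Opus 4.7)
The plan is to decompose both functionals into their Dirichlet and double-well parts and then assemble the $\Gamma$-convergence termwise. I will write $\GL_\ep^{W_n}(\nu^n) = D^{W_n}(\nu^n) + \frac{1}{\ep} \int_0^1 \Phi(u_n(x))\,dx$ and $\GL_\ep^W(\nu) = D^W(\nu) + \frac{1}{\ep} \int_0^1 \int_\R \Phi(\lambda)\,d\nu_x(\lambda)\,dx$, isolating the double-well potential as the second summand in each. The two ingredients to combine are Theorem \ref{thm: dirichlet gamma convergence}, which gives $D^{W_n} \gamra D^W$ with respect to narrow convergence of Young measures, and Proposition \ref{prop: double-well convergence}, which gives pointwise convergence of the potential term along any narrowly convergent sequence of the form $\nu^n_x = \delta_{u_n(x)}$ with $u_n \in \calV^n$.

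For the liminf inequality, I take an arbitrary narrowly convergent sequence $\nu^n \to \nu$ with $\nu^n_x = \delta_{u_n(x)}$, $u_n \in \calV^n$, apply Theorem \ref{thm: dirichlet gamma convergence} to obtain $\liminf_{n \to \infty} D^{W_n}(\nu^n) \geq D^W(\nu)$, and apply Proposition \ref{prop: double-well convergence} to obtain the full convergence of the potential term. Summing with the weight $1/\ep$ yields $\liminf_{n\to\infty} \GL_\ep^{W_n}(\nu^n) \geq \GL_\ep^W(\nu)$. For the recovery sequence, I reuse the sequence built in the proof of Theorem \ref{thm: dirichlet gamma convergence}(ii) via denseness of $\delta$-Young measures in $\calY((0,1),\R)$ \cite{valadier1990young}. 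Along that sequence, Corollary \ref{cor: dirichlet convergence} upgrades the Dirichlet bound to full pointwise convergence $D^{W_n}(\nu^n) \to D^W(\nu)$, and Proposition \ref{prop: double-well convergence} again delivers convergence of the potential term, so $\GL_\ep^{W_n}(\nu^n) \to \GL_\ep^W(\nu)$, which is stronger than the required $\limsup$ bound.

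The main obstacle, and the only point worth flagging explicitly, is that $\Gamma$-limits do not distribute over sums in general: the $\Gamma$-limit of $F_n + G_n$ need not equal the sum of the individual $\Gamma$-limits. What rescues the argument here is that Proposition \ref{prop: double-well convergence} provides not just $\Gamma$-convergence but \emph{continuity} of the potential term along narrowly convergent sequences of $\delta$-Young measures from $\calV^n$. Whenever one summand converges continuously along the reference topology, its addition preserves $\Gamma$-convergence of the other summand and the $\Gamma$-limit of the sum is the sum of the limits. Once this observation is in hand, the proof collapses into a short assembly of Theorem \ref{thm: dirichlet gamma convergence} and Proposition \ref{prop: double-well convergence}.
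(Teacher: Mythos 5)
Your proposal is correct and follows essentially the same route as the paper: decompose $\GL^{W_n}_\ep = F_n + G_n$ into Dirichlet and double-well parts, invoke Theorem \ref{thm: dirichlet gamma convergence} for $F_n \gamra F$ and Proposition \ref{prop: double-well convergence} for the continuous (pointwise along narrowly convergent sequences) convergence $G_n \ra G$, and conclude via stability of $\Gamma$-convergence under continuous perturbations. The paper states this more tersely, while you spell out the liminf and recovery-sequence steps and explicitly flag why the sum decomposition is legitimate, but the underlying argument is identical.
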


\begin{proof}
Let $\GL^{W_n}_\ep= F_n + G_n$ and $\GL^W_\ep= F + G$, where $F_n$ and $F$ denote the graph and graphon Dirichlet energies, respectively, and $G_n$ and $G$ correspond to the graph and graphon double-well energies, respectively.
Given the $\Gamma$-convergence (see Corollary \ref{cor: dirichlet convergence}) of the graph Dirichlet energy~\eqref{eq: graph dirichlet} (that is, $F_n \gamra F$) and the fact that $G_n \ra G$ (i.e., the pointwise convergence of the double-well potential that we proved in Proposition \ref{prop: double-well convergence}), we invoke the fact that $\Gamma$-convergence still holds under a continuous perturbation (see \cite[Remark 2.2]{braides2006handbook}).
Consequently, we obtain $F_n + G_n \gamra F + G$ with respect to narrow convergence of Young measures. 
\end{proof}

\medskip


\subsection{\texorpdfstring{The issue of $\ep$-scaling}{The issue of ep scaling}}\label{sec: scaling}

A key property of the classical GL functional~\eqref{eq: classical GL} is its $\Gamma$-convergence to the TV functional in the $\ep \ra 0$ limit \cite{modica1977esempio}. Consider
\begin{equation} 
\label{classical GL replicated}
	 \GL_\ep(u) = \ep \int_0^1 |\nabla u|^2 \,dx + \frac{1}{\ep} \int_0^1 \Phi(u(x)) \,dx \,, 
\end{equation}	 
which is equation~\eqref{eq: classical GL} (but we show it again for convenience).
As $\ep$ shrinks, the double-well term becomes larger due to its prefactor ${1}/{\ep}$. This larger contribution from the double-well term encourages narrower regions for $u$ to jump between $-1$ and $1$. However, steeper jumps of $u$ contribute more to the Dirichlet energy $\int_0^1 |\nabla u|^2 \,dx$. As the interface size $\ep$ shrinks to $0$, the contribution from each jump grows to $\infty$. The prefactors that ensure that the Dirichlet energy and double-well potential remain $O(1)$ as $\ep \ra 0$ are $\ep$ and $1/\ep$, respectively. One can see {the associated scaling limits} by substituting $x \mapsto x/\ep$ into the classical GL functional~\eqref{eq: classical GL}. Compare~\eqref{classical GL replicated} to the graph GL functional
\begin{equation}\label{eq: graph gl functional 2}
	\GL^{W_n}_\ep = \int_0^1\int_0^1 W_n(x,y)|u(x)-u(y)|^2 \, dx \, dy + \frac{1}{\ep} \int_0^1 \Phi(u(x))\,dx  \,,
\end{equation}	
which is equation~\eqref{eq: graph GL}. The graph GL functional~\eqref{eq: graph gl functional 2} does not require the prefactor $\ep$ in the graph Dirichlet energy because a graph inherently has no infinitesimal spatial {limit. Each} jump of $u$ from $-1$ to $1$ {(and vice versa)} contributes a finite amount to the {graph} Dirichlet energy, so both {terms of the graph GL functional remain} $O(1)$ even when $\ep \ra 0$. As with the classical GL functional, the} double-well potential is a penalty term that enforces $u$ to be binary.

It is natural to ask how one should scale the graphon GL functional $\GL^W_\ep$, which is similar {to \eqref{eq: graph gl functional 2} in the sense that} one computes differences of $u$ (rather than derivatives of $u$). Note that $W(x,y) |u(x) - u(y)|^2 = 4 W(x,y)$ when $u(x) = 1$ and $u(y) = -1$ (or vice versa). Because $4W(x,y)$ is finite when $W \in L^\infty((0,1)^2)$, one does not need an $\ep$ prefactor {for} $L^\infty$ graphons. 

If the graphon Dirichlet energy \eqref{eq: graphon dirichlet} is unbounded, as is the case for $L^p$ graphons, the scaling {rate} in $\ep$ depends on the choice of $W$. The reason for this is that it is necessary to scale the unbounded graphon Dirichlet energy $D^W$ at an appropriate rate to ensure that both the Dirichlet energy and the double-well potential remain $O(1)$. The more singular $W$ is, the faster one needs to scale down the graphon Dirichlet energy (and vice versa). Formalizing the relationship between the singularity strength of $W$ and the $\ep$-scaling is an interesting topic for future work.


\subsection{\texorpdfstring{Limit (4): $\GL^W_\ep \gamra TV^W$ as $\ep \ra 0$ for {$W \in L^\infty$}}{Limit (4)}} \label{sec: limit (4)}

We prove a version of the classical limit $\GL_\ep \gamra \TV$ for graphon GL and graphon TV functionals. We consider only $L^\infty$ graphons because it is difficult to determine the correct $\ep$-scaling for general $L^p$ graphons.

\begin{theorem}\label{thm: graphon gl convergence ep limit}
Suppose that $W \in L^\infty((0,1)^2)$. {As $\ep \ra \infty$, we then have}
    \begin{equation}
     	\GL^W_\ep \gamra \TV^W 
     \end{equation}
     {with respect to narrow convergence of Young measures.} 
\end{theorem}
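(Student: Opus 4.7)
The theorem is the nonlocal, Young-measure analogue of the classical Modica--Mortola sharp-interface limit $\GL_\ep \gamra \TV$, and it corresponds to limit~(4) of Figure~\ref{figure:convergences}; the direction in which the $\Gamma$-convergence genuinely takes place is $\ep \to 0$, so that the singular penalty $\tfrac{1}{\ep}\int\int\Phi\,d\nu_x\,dx$ can force $\Gamma$-limits into the binary set $\mathcal{Y}^b$. The plan rests on two compatibility facts that I would establish at the outset. First, $\Phi(s) = (s^2-1)^2$ vanishes exactly on $\{-1,1\}$, so the penalty vanishes identically on $\mathcal{Y}^b$ and diverges off it as $\ep \to 0$. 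Second, a direct computation on atomic measures $\nu_x = a\delta_{-1} + (1-a)\delta_1$, $\nu_y = b\delta_{-1} + (1-b)\delta_1$ gives the identity $\int_{\R^2}|\lambda-\mu|^2\,d\nu_x\,d\nu_y = 2\int_{\R^2}|\lambda-\mu|\,d\nu_x\,d\nu_y$, which combined with the factor $2$ in the definition of $\TV^W$ yields $D^W(\nu) = \TV^W(\nu)$ for every $\nu \in \mathcal{Y}^b$. On the binary set the graphon GL functional already coincides with the graphon TV functional.

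For the $\limsup$ (recovery) inequality, the target $\nu$ is its own recovery sequence. If $\nu \notin \mathcal{Y}^b$ the inequality is vacuous since $\TV^W(\nu) = +\infty$; if $\nu \in \mathcal{Y}^b$, then $\GL^W_\ep(\nu) = D^W(\nu) + 0 = \TV^W(\nu)$ independently of $\ep$, so $\limsup_\ep \GL^W_\ep(\nu) = \TV^W(\nu)$. No interpolating transition profile is needed, because the Dirichlet term in $\GL^W_\ep$ carries no $\ep$-weight, in contrast to the classical local case.

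For the $\liminf$ inequality, fix $\nu^\ep \to \nu$ narrowly and split on whether $\nu \in \mathcal{Y}^b$. If $\nu \notin \mathcal{Y}^b$, then $\int_0^1\int_\R \Phi\,d\nu_x\,dx > 0$; applying narrow convergence to the bounded continuous test function $\min(\Phi,K)$ for $K$ large shows that $\int_0^1\int_\R \Phi\,d\nu^\ep_x\,dx$ is eventually bounded below by a positive constant, and the $\tfrac{1}{\ep}$ prefactor drives $\GL^W_\ep(\nu^\ep) \to +\infty = \TV^W(\nu)$. If $\nu \in \mathcal{Y}^b$, I would drop the nonnegative penalty and prove lower semicontinuity of $D^W$: for any fixed $R \geq 4$ the integrand $f_R(\lambda,\mu) = \min(|\lambda-\mu|^2,R)$ lies in $C^b(\R^2)$, so Theorem~\ref{thm: main} applied with the fixed graphon $W$ yields $\int_0^1\int_0^1 W \int_{\R^2} f_R\,d\nu^\ep_x\,d\nu^\ep_y \to \int_0^1\int_0^1 W \int_{\R^2} f_R\,d\nu_x\,d\nu_y$; since $f_R \leq |\lambda-\mu|^2$ and $W \geq 0$, the left side bounds $D^W(\nu^\ep)$ from below, while the right side equals $D^W(\nu)$ because $\nu \in \mathcal{Y}^b$ forces the relevant values of $|\lambda-\mu|^2$ to lie in $\{0,4\}\subset [0,R]$.

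The main obstacle is this last lower-semicontinuity step, since the natural integrand $|\lambda-\mu|^2$ is not in $C^b(\R^2)$ and so falls outside the hypotheses of Theorem~\ref{thm: main}. The truncation by $f_R$ handles it cleanly when $W$ is nonnegative; for a signed $W$ one would split $W = W_+ - W_-$ and control the resulting error using $\|W\|_\infty < \infty$. The hypothesis $W \in L^\infty((0,1)^2)$ is essential both here (to make $D^W$ finite on bounded Young measures) and to keep the truncation error controllable. For the more singular $L^p$ graphons the correct $\ep$-scaling is not known, as discussed in Section~\ref{sec: scaling}, which is why the theorem is restricted to the $L^\infty$ class.
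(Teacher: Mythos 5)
Your proposal follows essentially the same route as the paper: both decompose $\GL^W_\ep = D^W + E_\ep$, show that the $\tfrac{1}{\ep}$-penalty forces the $\Gamma$-limit onto $\mathcal{Y}^b$, use continuity of $D^W$ under narrow convergence, and identify $D^W$ with $\TV^W$ on binary Young measures via $|\lambda-\mu|^2 = 2|\lambda-\mu|$ on $\{-1,1\}^2$. Your truncation of $|\lambda-\mu|^2$ by $f_R \in C^b(\R^2)$ is in fact a more careful treatment than the paper's, which asserts the continuity of $D^W$ directly from the definition of narrow convergence even though the integrand is unbounded; otherwise the arguments coincide.
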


To prove Theorem \ref{thm: graphon gl convergence ep limit}, we follow a strategy that resembles the proof of \cite[Theorem 3.1]{van2012gamma}. Suppose that {a sequence {$\{\nu^n\} \in \mathcal{Y}((0,1),\R)$} of Young measures} converges narrowly to $\nu \in \mathcal{Y}((0,1),\R)$. Lemma \ref{lemma: double-well gamma convergence} shows that the double-well potential $\Gamma$-converges either to $0$ or to $\infty$ as $\nu^n \ra \nu$ narrowly.

\begin{lemma} \label{lemma: double-well gamma convergence}
    Let $\Phi: \R \ra \R$ be a continuous and bounded potential function. Consider the functionals $E_\ep$ and $E_0$ on $\mathcal{Y}([0,1],\R)$ that are given by
    \begin{equation}\label{graphon double well}
 	   E_\ep(\nu) = 
        \begin{cases}
        \displaystyle
            \frac{1}{\ep} \int_0^1 \int_\R \Phi(\lambda)\,d\nu_x(\lambda) \, dx &\text{ if } \,\, {\nu \in \mathcal{Y}([0,1],\{-1,1\})} \\
        + \infty & {\text{ otherwise}}
        \end{cases}
    \end{equation}	   
    and
    \begin{equation}
    E_0(\nu) = 
        \begin{cases}
        0 &\text{ if } \,\, {\nu \in \mathcal{Y}([0,1],\{-1,1\})} \\
        + \infty &\text{ otherwise}\,, 
        \end{cases}
    \end{equation}
where $\mathcal{Y}([0,1],\{-1,1\}) \subset \mathcal{Y}([0,1],\R)$ is the set of Young measures with support on $\{-1,1\}$. As $\ep \ra 0$, we have
\begin{equation}
        E_\ep \gamra E_0 
    \label{eq: double-well gamma convergence}
\end{equation}      
    with respect to narrow convergence of {Young measures}. 
\end{lemma}

\begin{proof}[Proof of Lemma \ref{lemma: double-well gamma convergence}]
    Consider sequences $\{\ep_n\}_{n\in \N}$ and $\{\nu^n\}_{n\in \N}$ such that $\ep_n \ra 0$ and $\nu^n \ra \nu$ narrowly for some $\nu \in \mathcal{Y}([0,1],{\mathbb{R}})$. We verify the {lim inf} inequality (i) and the {lim sup} inequality (ii) of Definition \ref{def: gamma convergence}.

    {For} $\nu \in \mathcal{Y}([0,1],\{-1,1\})$, it follows that $E_0(\nu) = 0$. {Therefore, the lim inf} inequality $E_0(\nu) \leq$ $\liminf\limits_{n\ra\infty} E_{\ep_n}(\nu^n)$ 
    holds because {$E_{\ep_n}$} is nonnegative. {For} $\nu \in \mathcal{Y}([0,1],\R) \setminus \mathcal{Y}([0,1],\{-1,1\})$, we will show that $\nu^n \not\in \mathcal{Y}([0,1],\{-1,1\})$ for sufficiently large $n$. 
    {{To do this, we prove the contrapositive statement.} Suppose that $\nu^n \in \calY([0,1], \{-1,1\})$ for arbitrarily large $n$. The limiting Young measure $\nu$ then cannot have support on any point $y \in \R \setminus \{-1,1\}$. Consider a continuous and bounded function $f$ with support on a small interval around {the point} $y$ that does not include the points $1$ or $-1$.
    The narrow convergence of $\nu^n$ to $\nu$ implies that $\int_\R f(\lambda) d\nu^n_x(\lambda) \weakstarra \int_\R f(\lambda) \,d\nu_x(\lambda)$. However,
    $\int_\R f(\lambda) \,d\nu_x(\lambda) \geq 0$ for all $x$ and $\int_\R f(\lambda) d\nu^n_x(\lambda) = 0$ for all $x$. }
    Therefore, {for} $\nu \in \mathcal{Y}([0,1],\R) \setminus \mathcal{Y}([0,1],\{-1,1\})$, there must be some $x$ such that $\nu^n_x$ has support in $\R \setminus \{-1,1\}${. Therefore,} $\int_\R \Phi(\lambda)\,d\nu^n_x(\lambda) > 0$ for that $x${, so
    the lim inf inequality holds. Consequently,} 
    \begin{equation}
        \liminf_{n\ra\infty} E_{\ep_n}(\nu^n)\geq  {\int_0^1} \liminf_{n\ra\infty} \frac{1}{\ep_n}   \int_\R \Phi(\lambda)\,d\nu^n_x(\lambda) \, dx = \infty = E_0(\nu) \,. 
    \end{equation}   
The first inequality holds {by} Fatou's {Lemma}. The first equality holds because $\ep_n \ra 0$ and the integrand is uniformly bounded away from {$0$}. The second equality holds because $\nu^n \not\in \mathcal{Y}([0,1],\{-1,1\})$. We now verify the {lim sup} inequality $\limsup_{n\ra\infty} E_{\ep_n}(\nu^n) \leq E_0(\nu)$. Suppose that $\nu \in \mathcal{Y}([0,1],\{-1,1\})$. We choose $\nu^n = \nu$ for all $n$, so $E_0(\nu) = 0 = \limsup_{n\ra \infty} E_{\ep_n}(\nu^n)$. {With} $\nu \in \mathcal{Y}([0,1],\R)\setminus \mathcal{Y}([0,1],\{-1,1\})$, {it follows that} $E_0(\nu) = \infty$ and thus that the {lim sup} inequality $\limsup_{n\ra\infty} E_n (\nu_n) \leq E_0(\nu)$ holds for any sequence {$\{\nu^n\}$} that converges narrowly to $\nu$. 
\end{proof}

We now prove Theorem \ref{thm: graphon gl convergence ep limit}.

\begin{proof}[Proof of Theorem \ref{thm: graphon gl convergence ep limit}]

To prove the desired result, we use the fact that $\Gamma$-convergence persists under a continuous perturbation. Note that $\GL^W_\ep(\nu) = D^W(\nu) + E_\ep(\nu)$, where $D^W$ is the graphon Dirichlet energy \eqref{eq: graphon dirichlet} and $E_\ep$ is the double-well potential \eqref{graphon double well}. {The key ingredients in our proof are} the guarantee 
that the double-well potential $\Gamma$-converges (see Lemma \ref{lemma: double-well gamma convergence}) and {the fact} that the Dirichlet energy~\eqref{eq: graphon dirichlet} is continuous in $\nu$. {The Dirichlet energy} is continuous because $\nu^n$ converges narrowly to $\nu$, which implies (by Definition~\eqref{definition narrow convergence}) for $W \in L^1$ that
\begin{align*}
&  \int_0^1\int_0^1 W(x,y) \int_{\R^2} |\lambda-\mu|^2 \,d\nu^n_x(\lambda)\,d\nu^n_y(\mu) \,dx\,dy \\
& \ra \int_0^1 \int_0^1 W(x,y) \int_{\R^2} |\lambda-\mu|^2 \,d\nu_x(\lambda)\,d\nu_y(\mu) \,dx\,dy\,.
\end{align*}
Therefore, 
\begin{equation*}
    \GL^W_\ep = D^W + \frac{1}{\ep} \int_0^1 \int_\R \Phi(\lambda)\,d\nu_x(\lambda) \, dx 
\end{equation*}	
is a continuous perturbation of the double-well potential. Because $\Gamma$-convergence is stable under continuous perturbations, equation~\eqref{eq: double-well gamma convergence} implies that
\begin{equation}
    \label{eq: dirichlet plus indicator}
    \GL^W_\ep \; \gamra \; D^W + \begin{cases}
        0 &\text{ if } \,\, \nu \in \mathcal{Y}^b \\
        + \infty &\text{ otherwise} 
    \end{cases}
\end{equation}
as $\ep \ra 0$.

The right-hand side of~\eqref{eq: dirichlet plus indicator}  is {equal to} $\TV^W(\nu)$. We quickly verify this statement. If $\nu \in \mathcal{Y} \setminus \mathcal{Y}^b$, then both the $\Gamma$-limit in~\eqref{eq: dirichlet plus indicator} and $\TV^W(\nu)$ are $\infty$. If $\nu \in \mathcal{Y}^b$, then $\lambda$ and $\mu$ can only take the values $\pm 1$. Therefore, $\lambda$ and $\mu$ can only {have support in $\{\pm 1\}$}, so $|\lambda - \mu|^2$ equals either $2|\lambda - \mu|$ or $0$. 
\end{proof}


\section{{GL minimizers} for several examples}\label{sec: examples}

It is informative to {determine} the GL minimizers for some families of graphons. We characterize the minimizers of several graph GL functionals, and we illustrate the resulting Young measure in the graphon limit. Taking the $\ep \ra 0$ limit of a GL minimizer then allows us to infer the associated $\TV$ minimizer.

For more general situations than our simple examples, it is difficult to obtain analytical characterizations of graph and graphon GL minimizers. One can seek more general Young-measure minimizers using numerical approximations; see \cite{nicolaides1993computation, carstensen2000numerical} for possible approaches. We leave such endeavors to future work.

The Young measure $\nu_x = \delta_{1}$ for all $x$ (i.e., the constant function $u(x) = 1$) is a 
trivial minimizer of the graphon GL functional \eqref{eq: graphon GL} for all graphons $W$. Similarly, $\nu_x = \delta_{-1}$ is also a minimizer for all $W$. These are minimizers because the double-well potential $\Phi(s)$ is $0$ for $s = \pm 1$ and the Dirichlet energy term is $0$ for constant functions. {The constant function $u\equiv 1$ (which corresponds to the Young measure $\nu_x \equiv \delta_{1}$}) is also a trivial minimizer of the graph functional \eqref{eq: graph GL}. 

The trivial minimizer is the only minimizer of the GL functional because the GL functional is nonnegative and equals $0$ only for the trivial minimizer. To make the minimization problem nontrivial, we use {a mean-value constraint.}
For the graphon GL-minimization problem, we thus {impose the mean value}
\begin{equation}\label{eq: graphon mean-value constraint}
    \int_0^1 \int_{\R} \lambda \, d\nu_x(\lambda) \,dx = c
\end{equation}
for a {specified} constant $c \in (-1,1)$. The analogous {constraint} for the graph GL-minimization problem is 
\begin{equation}\label{eq: graph mean-value constraint}
    \frac{1}{n} \sum_{i = 1}^n u_i = c \,. 
\end{equation}
This constraint entails that the graph function $u$ has a mean value of $c$. The double-well potential $\Phi$ forces $u$ to take values near $+1$ and $-1$, and the mean-value constraint prevents $u$ from settling at only one of the values $+1$ or $-1$. Consequently, $u$ exhibits a \textit{phase separation}, where the values $+1$ and $-1$ correspond to two different ``phases"~\cite{du2020phase}. 
We consider $c \in (-1,1)$ in Section \ref{sec: constant graphon} and consider $c = 0$ in Section \ref{sec: sbm}.

In this section, we denote graph functions by $u$ (rather than by $u_n$, as we did previously). Additionally, we denote both functions and Young measures on graphons by their Young measure counterpart $\nu$.


\subsection{The constant graphon} \label{sec: constant graphon}

The constant graphon is the large-graph limit of the complete graph (for which $W \equiv 1$), Erd\H{o}s--R\'{e}nyi (ER) graphs (for which $W \equiv p \in (0,1)$)\cite[Section 10.1]{lovasz2012large}, and some growing preferential-attachment graphs (including Barab\'{a}si--Albert (BA) graphs, for which $W \equiv {p \in (0,1)}$)~\cite[Example 11.44 and Proposition 11.45]{lovasz2012large}. 

Consider the graph GL functional on the $n$-node complete graph. Let $A^{(n)}_{ij} = p$ for all $i$ and $j$ in \eqref{eq: graph GL}. This leads to
\begin{align*}
    \GL^{W_n}_\ep (u) &= \frac{1}{n^2} \sum_{i,j = 1}^n p (u_i - u_j)^2 + \frac{1}{\ep n} \sum_{i = 1}^n \Phi (u_i) 
    \\
    &= \frac{p}{n^2} \left( 2n\sum_{i = 1}^n u_i^2 - 2\sum_{i,j = 1}^n u_i u_j \right) + \frac{1}{\ep n} \sum_{i = 1}^n (u_i^4 - 2u_i^2 + 1)
    \\
    &= \frac{1}{\ep n}  \sum_{i = 1}^n  \left(u_i^4 -  (2 - 2\ep p )  u_i^2\right) - \frac{2{p}}{n^2} \left(\sum_{i = 1}^n u_i\right)^2 + \frac{1}{\ep}
        \\
    &= \frac{1}{\ep n}  \sum_{i = 1}^n  (u_i^2 -  (1 - \ep p))^2 - \frac{2p}{n^2} \left(\sum_{i = 1}^n u_i \right)^2 - \frac{1}{\ep} (1 - \ep p)^2 + \frac{1}{\ep} \,. \numberthis 
    \label{eq: const discrete fnl}
\end{align*}

Fix $\ep < 1$. {By removing constant terms, we see that} minimizing \eqref{eq: const discrete fnl} subject to the {mean-value constraint} \eqref{eq: graph mean-value constraint} is equivalent to minimizing the energy
\begin{equation}\label{eq: equivalent const gl fnl}
   E_n(v):= \sum_{i=1}^n \left( v_i^2 - 1\right)^2 \,, 
\end{equation}
where 
\begin{equation}\label{eq: change of vars}
    v_i = \frac{u_i}{\sqrt{{1 - \ep p}}} \,.
\end{equation} 
With the change of variables \eqref{eq: change of vars}, the {mean-value constraint} \eqref{eq: graph mean-value constraint} becomes 
\begin{equation}\label{eq: mean-value constraint for v_i}
    \frac{1}{n} \sum_{i=1}^n v_i = \frac{c}{\sqrt{1 - \ep p}}\,. 
\end{equation}
Minimizing \eqref{eq: equivalent const gl fnl} subject to  the {mean-value constraint} \eqref{eq: mean-value constraint for v_i} leads to the Euler--Lagrange equations 
\begin{equation} \label{eq: const graph EL equation}
    v_i^3 - v_i = \tau \quad \text{and} \quad \frac{1}{n} \sum_{i  =1}^n v_i = \frac{c}{\sqrt{1 - \ep p}} \,, 
\end{equation}
where $\tau$ is a Lagrange multiplier that is associated with the {mean-value constraint}.

\begin{proposition}[Characterization of GL minimizers for the complete graph with constant edge weight]\label{prop: const gl minimizers}
Let $W_n \equiv p$, and suppose that $|c| < \sqrt{1 - \ep p}$ for sufficiently small $\ep$.
The minimizers of $\GL^{W_n}_\ep$ {under the {mean-value constraint} \eqref{eq: mean-value constraint for v_i}} are functions $u$ with 
at most three different values in their codomains. These values are in the set
$\{ z_{\pm, n}\}$, where  $z_{\pm, n} = \pm \sqrt{1 - \ep p} + O(1/\sqrt{n})$,
except for at most one node on which $u$ takes a value that is $O(1/\sqrt{n})$. 
\end{proposition}

As $n \ra \infty$ and $\ep \ra 0$, the values
$\{\pm \sqrt{1 - \ep p} + O(1/\sqrt{n})\}$ of the minimizers approach $\{\pm 1\}$. The minimizers are not unique because the energy \eqref{eq: equivalent const gl fnl} and the constraints \eqref{eq: mean-value constraint for v_i} are invariant with respect to permutations of the subscripts.

We now prove Proposition \ref{prop: const gl minimizers}.

\begin{proof}

With the change of variables \eqref{eq: change of vars}, it suffices to describes minimizers $v$ of the equivalent minimization problem \eqref{eq: equivalent const gl fnl}. 

{We first show} that the energy functional \eqref{eq: equivalent const gl fnl} for a minimizer $v$ is bounded above by $1$. 
Consider a candidate minimizer $\tilde{v}$ with $\tilde{v}_i\in \{\pm 1\}$ for all {$i \in [n]$} that takes the value $+1$ on $\frac{n}{2}(1 + \frac{c}{\sqrt{1 - \ep p}})$ of the nodes and the value $-1$ on $\frac{n}{2}(1 - \frac{c}{\sqrt{1 - \ep p}})$ of the nodes. If these numbers of positive and negative values are not integers, we round the numbers; because the two numbers sum to $n$, one rounds up and one rounds down. (It does not matter which one rounds up and which one rounds down.)
There may be one node $j$ that does not have an assigned value. On node $j$, the candidate minimizer $\tilde{v}$ takes whichever value allows it to satisfy the {mean-value constraint}. Namely, $\tilde{v}_j = \frac{nc}{\sqrt{1 - \ep p}} \, - \, \text{round}(\frac{nc}{\sqrt{1 - \ep p}})$, where ``round" maps a real number to the nearest integer and $0.5$ rounds up. Because $|\tilde{v}_j| < 1$, the energy is $E_n(\tilde{v})\leq 1$ for this choice of $\tilde{v}$. For a minimizer $v$, we have $E_n(v) \leq E_n(\tilde{v}) \leq 1$. 

We now show that the candidate minimizer is an actual minimizer. By equation \eqref{eq: equivalent const gl fnl}, the energy bound $E_n(v) \leq 1$ implies that there must be some {node} $j$ such that  $|v_j^2 - 1|\leq 1/\sqrt{n}$. Because $|\tau| = |v_j^2 - 1| |v_j|\leq (1/\sqrt{n})(1 +  1/\sqrt{n})^{1/2}$, 
we have {$|\tau| = O(1/\sqrt{n})$}.
For large $n$, the cubic polynomial
$g(x) = x^3 - x$ is a monotone function 
around its three simple roots and the derivatives at the roots are constants that are bounded away from $0$ uniformly with respect to $n$. Therefore, whenever $|g(v_i)| = |v_i^2 - 1| |v_i| = |\tau| \leq (1/\sqrt{n})(1 +  1/\sqrt{n})^{1/2}$, {the Inverse Function Theorem guarantees} that either $|v_i^2 - 1| = O(1/\sqrt{n})$ or $|v_i| = O(1/\sqrt{n})$.
{The contribution to $E_n$ of any $v_i$ that satisfies $|v_i^2 - 1| = O(1/\sqrt{n})$ is $O(1/\sqrt{n})$, {and} the contribution to $E_n$ of any $v_i$ that satisfies $|v_i| = O(\sqrt{1/n})$ is at least $(1 - {O(1/n)} )^2$.} Therefore, with the bound $E_n(v) \leq 1$, we see that there is at most one node $i$ with $|v_i| = O(1/\sqrt{n})$ {for} sufficiently large $n$.
Every other value of $v$ {must} be a root 
of $g(x) = \tau$ and
satisfies $|v_i^2 - 1| = O(1/\sqrt{n})$. {Therefore,} $v_i\to \pm 1$ as $n\to \infty$.

\end{proof}

The minimizer $v$ of \eqref{eq: equivalent const gl fnl} that we constructed has a discontinuous profile, as it has jumps between the values $+ \sqrt{1 - \ep p}$ and $-\sqrt{1 - \ep p}$. See the related discussion in \cite{du2016asymptotically}.
Furthermore, as $n \ra \infty$, the limit $v$ of the sequence of minimizers takes the values $\pm 1$ almost everywhere as $\ep \ra 0$, so (by the change of variables \eqref{eq: change of vars}) $u$ takes values $\pm \sqrt{1 - \ep p}$ almost everywhere.  

We now examine where the optimal $u$ takes the values $\pm \sqrt{1 - \ep p}$. In doing so, we illustrate the properties of the GL minimizer in the graphon limit. Because $\GL^{W_n}_\ep \gamra \GL^W_\ep$ as $W_n \sqra W$, we know (see Section \ref{sec: limit (3)}) that characterizing the GL minimizers for the constant graph gives insight into GL minimizers for the constant graphon.
We show that the (arbitrarily many) oscillations between the two values $+\sqrt{1 - \ep p}$ and $-\sqrt{1 - \ep p}$ do not affect the optimality of $u$. The following discussion also helps explain why graphon functionals act on Young measures. We consider two different Young measures that minimize \eqref{eq: equivalent const gl fnl} (and thus minimize the constant-graphon GL functional) and that satisfy the mean-value constraint~\eqref{eq: graphon mean-value constraint} but have different amounts of oscillation.

First, consider the Young measure $\{\nu^{(1)}_x\}_{x\in (0,1)}$ that is defined by
\begin{equation}
\label{eq: const ym minimizer}
    \nu^{(1)}_x =  \frac{1 + c}{2} \delta_{\sqrt{1 - \ep p}} + \frac{1 - c}{2} \delta_{-\sqrt{1 - \ep p}} \,. 
\end{equation}

The Young measure $\nu^{(1)}$ is the limit, with respect to narrow convergence in the space of Young measures, of sequences of increasingly oscillatory functions $u$ that take the value $\sqrt{1 - \ep p}$ on a proportion $\frac{1 + c}{2}$ of the points in $[0,1]$ and the value $-\sqrt{1 - \ep p}$ on a proportion $\frac{1 - c}{2}$ of the points in $[0,1]$. 
Although $\nu^{(1)}$ is not a function, one can think of it as the limit of increasingly oscillatory functions that satisfy the constraint \eqref{eq: graphon mean-value constraint}. 
The graphon GL functional \eqref{eq: graphon GL} evaluated {at} $\nu^{(1)}$ is 
\begin{equation} \label{eq: gl for nu one}
    {\GL^W_\ep(\nu^{(1)}) = 2 {(1 - \ep p)} (1 - c)(1 + c) \int_0^1 \int_0^1 W(x,y) \,dx \, dy   + \ep p^2} \,. 
\end{equation}
The first term on the right-hand side of~\eqref{eq: gl for nu one} is the Dirichlet-energy term, and the second term (i.e., $\ep p^2$) is the double-well term.

Second, consider the Young measure $\{\nu^{(2)}_x\}_{x\in (0,1)}$ that is defined by 
\begin{equation}
	\nu^{(2)}_x = \delta_{u(x)}\,, \, \text{ where } \, u(x) 
		= \begin{cases}
		        -\sqrt{1 - \ep p}\, , &0 < x < \frac{1 - c}{2} \\
        			\sqrt{1 - \ep p}\, , &\frac{1 - c}{2} < x < 1\,,
    \end{cases}
\end{equation}
which represents a non-oscillatory limit because (1) $u$ has a single transition and (2) $u$ has a fixed value on each side of the transition. The location of this transition ensures that $\nu^{(2)}$ satisfies the mean-value} constraint \eqref{eq: graphon mean-value constraint}. The graphon GL functional \eqref{eq: graphon GL} evaluated at $\nu^{(2)}$ is 
    \begin{equation}\label{eq: gl for nu two}
        {\GL^W_\ep(\nu^{(2)}) = 8 (1 - \ep p) \int_0^{\frac{1 - c}{2}} \int_{\frac{1 - c}{2}}^1 W(x,y) \,dx \,dy + \ep p^2} \,. 
    \end{equation}
As in equation \eqref{eq: gl for nu one}, the first term on the right-hand side of~\eqref{eq: gl for nu two} is the Dirichlet-energy term and the second term is the double-well term. 

To illustrate the effect of oscillations on the value of the graphon GL functional {\eqref{eq: graphon GL} for the constant graphon $W \equiv p$}, we compare the values of $\GL^W_\ep(\nu^{(1)})$ and $\GL^W_\ep(\nu^{(2)})$. Consider the case $W \equiv 1$ and suppose that the {mean-value constraint} \eqref{eq: graphon mean-value constraint} holds. Equations \eqref{eq: gl for nu two} and \eqref{eq: gl for nu one} yield
\begin{equation*}
    \GL^W_\ep(\nu^{(1)}) = \GL^W_\ep(\nu^{(2)}) = 2{(1 - \ep p)} (1 - c)(1 + c) + \ep p^2 \,,
\end{equation*}
which illustrates that oscillations do not increase the value of the graphon GL for the constant graphon.
This contrasts with what occurs for classical GL functionals, which are concerned with interfaces and smoothness \cite{du2020phase}.

The contribution to the constant-graphon GL functional $\GL^W_\ep$ is the same whenever $\nu_x$ and $\nu_y$ differ from each other, regardless of how ``far" $x$ and $y$ are from each other. This situation contrasts sharply with classical GL functionals, where the contribution from variations in the value of $u$ is $|\nabla u(x)|^2$ (which penalizes local changes in the value of $u$ near $x$), rather than $W(x,y)|u(x) - u(y)|^2$ (which {penalizes nonlocal} variations in values).

In the $\ep \ra 0$ limit, the Young-measure minimizers \eqref{eq: const ym minimizer} converge to the limiting Young measure $\nu = \{\nu_x\}$, which is defined by
\begin{equation}\label{eq: const tv minimizer}
    \nu_x = \theta \delta_1 + (1 - \theta) \delta_{-1}
\end{equation}
for all $x \in [0,1]$. Because of the $\Gamma$-convergence $\GL^W_\ep \gamra \TV^W$ (see Section \ref{sec: limit (4)}), the Young-measure limit \eqref{eq: const tv minimizer} of the minimizers \eqref{eq: const ym minimizer} of $\GL^W_\ep$ is a minimizer of the limiting energy $\TV^W$ for $W \equiv p$. 

To help with later discussions, we also study the (unrealistic) case of ``oversaturation", which occurs when the {mean-value constraint} imposes that the mean value of $v$ is larger than $1$ or smaller than $-1$. Oversaturation occurs when $|c| > {\sqrt{1 - \ep p}}$. In the following lemma, we characterize the minimizers when $|c| \geq {\sqrt{1 - \ep p}}$ and show that the optimal value of $|c|$ is $\sqrt{1 - \ep p}$ when there is oversaturation.

\begin{lemma}[{Non-oversaturation of} GL minimizers]\label{lem: oversaturation const case}
When $|\gamma| := \left|\frac{c}{\sqrt{1 - \ep p}}\right| \geq 1$ in the {mean-value constraint} \eqref{eq: mean-value constraint for v_i}, the minimizer of \eqref{eq: equivalent const gl fnl} is the constant function $v^*$, which satisfies $v^*_i = \gamma$ for all $i \in [n]$; the minimum energy is $E_n(v^*) = n(\gamma^2 - 1)^2$. Equivalently, the minimizer of {the constant-graph GL functional} $\GL^{W_n}_\ep (u)$ with the {mean-value constraint} \eqref{eq: graph mean-value constraint} is $u^*$ (which satisfies $u_i^* = c$ for all $i \in [n]$), and the minimum energy is $\GL^{W_n}_\ep (u^*) = \frac{1}{\epsilon}\Phi(c) = \frac{1}{\epsilon}(c^2 - 1)^2$. Additionally, the energies $E_n(v^*)$ and $\GL^{W_n}_\ep (u^*)$ have smaller minima when $|c| = \sqrt{1-\ep p}$ than when $|c| > \sqrt{1 - \ep p}$.
\end{lemma}

\begin{proof}
    Let $\gamma = \frac{c}{\sqrt{1 - \ep p}}$, and rewrite the {mean-value constraint} \eqref{eq: graph mean-value constraint} as 
    \begin{equation}\label{eq: equiv graph vol constraint}
        \sum_{i = 1}^n v_i = \gamma n \,. 
    \end{equation}    
    
    We begin by showing that the minimizer of $E_n$ is
    the constant function. Let $h$ be the convex hull of $f(s) = (s^2 - 1)^2${. Therefore,} $h = 0$ on $[-1,1]$ and $h = f$ everywhere else.

   Define 
    \begin{equation*}
        H_n(v) := \sum_{i = 1}^n h(v_i) \,,
    \end{equation*}
which satisfies $H_n(v) \leq E_n(v)$ for any $v$.
Because $h$ is convex, it follows that $(h(x) + h(y))/2 \leq h\left((x + y)/2\right)$. Therefore, if $v_i \neq v_j$ for any $i \neq j$, the function $H_n$ decreases if $v_i$ and $v_j$ are both replaced by their mean $(v_i + v_j)/2$. Consequently, with the {mean-value constraint} \eqref{eq: equiv graph vol constraint}, the minimizer of $H_n$ is the constant function $v' \equiv \gamma = \frac{c}{\sqrt{1 - \ep p}}$. 
    Because $\gamma\geq 1$, we have $H_n(v') = E_n(v')$, so the minimizer of $E_n$ entails that $v_i = \gamma$, which yields 
    \begin{equation*}
    	E_n(v') = H_n(v') = n \left(\gamma^2 - 1\right)^2\,. 
    \end{equation*}
    The conclusion about $\GL^{W_n}_\ep(u)$ then follows immediately.
 \end{proof}


\subsection{\texorpdfstring{$2 \times 2$ stochastic block models (SBMs)}{2 x 2 stochastic block models (SBMs)}} \label{sec: sbm}

Consider the $2 \times 2$ piecewise-constant graphon
\begin{equation} \label{eq: sbm}
    W(x,y) = 
    \begin{cases}
        w_{11} &\text{ if } \,\, x \in S\,,\, y\in S \\
        w_{12} &\text{ if } \,\, x \in S\,,\, y\in S^c \\
        w_{21} &\text{ if } \,\, x \in S^c\,,\, y\in S \\
        w_{22} &\text{ if } \,\, x\in S^c\,,\, y\in S^c  \,,
    \end{cases}
\end{equation} 
where $w_{k \ell} \in [0,1]$ are edge weights and $S = (0, a)$ and $S^c = [a, 1)$, for some $a\in (0,1)$, are communities in $(0,1)$. This graphon is a stochastic block model (SBM)~\cite{newman2018}. Researchers use SBMs as generative models of graphs with various types of mesoscale network structures, such as assortative or disassortative block structures~\cite{peixoto2019}. SBMs are a relatively general class of graphs and graphons. Indeed, the Szemeredi Lemma implies that one can approximate any {$L^\infty$} graphon arbitrarily closely {in {cut norm} by an SBM~\cite[Lemma 3.1]{lovasz2007szemeredi}.}

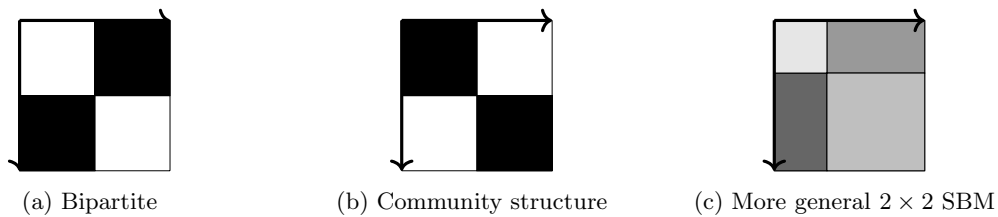
\begin{figure}[h!]
\captionsetup[subfigure]{font=footnotesize}
\centering
    \subcaptionbox{Bipartite {structure}}[.3\textwidth]{%
        \begin{tikzpicture}
            \draw[help lines, color=black, solid] (0,0) grid (2,2);
            \draw[->,very thick] (0,2)--(0,0);
            \draw[->,very thick] (0,2)--(2,2);
            \draw[color=black, fill = black] (0,0) rectangle (1,1);
            \draw[color=black, fill = black] (1,1) rectangle (2,2);
        \end{tikzpicture}
    }
    \subcaptionbox{Community structure}[.3\textwidth]{%
         \begin{tikzpicture}
                \draw[help lines, color=black, solid] (0,0) grid (2,2);
                \draw[->,very thick] (0,2)--(0,0);
                \draw[->,very thick] (0,2)--(2,2);
                
                \draw[color=black, fill = black] (0,1) rectangle (1,2);
                \draw[color=black, fill = black] (1,0) rectangle (2,1);
            \end{tikzpicture}
    }%
    \subcaptionbox{More general $2 \times 2$ SBM}[.3\textwidth]{%
        \begin{tikzpicture}
            \draw[help lines, color=black, solid] (0,0) grid (2,2);
            
            \draw[color=black, fill = gray!120] (0,0) rectangle (.7,1.3);
            \draw[color=black, fill = gray!80] (.7,1.3) rectangle (2,2);
            \draw[color=black, fill = gray!20] (0,1.3) rectangle (.7,2);
            \draw[color=black, fill = gray!50] (.7,0) rectangle (2,1.3);
            
            \draw[->,very thick] (0,2)--(0,0);
            \draw[->,very thick] (0,2)--(2,2);
        \end{tikzpicture}
    }
\caption{Three types of $2\times 2$ piecewise-constant graphons. 
}
\label{fig: 2x2}
\end{figure}

{Consider} the $2 \times 2$ piecewise-constant graphons in Figure \ref{fig: 2x2}. If one views {an} SBM as a generative model, the entries {$w_{k \ell}$} encode the probability that there is an edge between a node in community {$k \in \{1,2\}$} and a node in community $\ell \in \{1,2\}$. However, we view the entries $w_{k \ell}$ as fixed values that form a block-diagonal adjacency matrix in which adjacency-matrix entries take the values 
of $W(x,y)$ in \eqref{eq: sbm}.

In an assortative SBM, the intra-community edge probabilities $w_{kk}$ are larger than the inter-community edge probabilities $w_{k \ell}$ (with $k \neq \ell$).
In Figure \ref{fig: 2x2}, we show three examples of graphons; in these examples, black blocks have the value $1$, white blocks have the value {$0$}, and gray blocks have values in the interval $(0,1)$. The origin is at the upper-left corner of each graphon. An SBM with planted community structure (see Figure \ref{fig: 2x2}(b)) is an extreme case of {an} assortative block structure~\cite{peixoto2019}. In a disassortative SBM, the inter-community edge probabilities $w_{k \ell}$ (with $k \neq \ell$) are larger than the intra-community edge probabiliites $w_{kk}$, so inter-community edges are more likely than intra-community edges. (In this case, the term ``community" is technically a misnomer, but we use it for simplicity.)
An extreme example is a bipartite SBM (see Figure \ref{fig: 2x2}(a)). 
Finally, Figure \ref{fig: 2x2}(c) shows an example of a general $2 \times 2$ SBM with two distinct community sizes. When {$w_{21} = w_{12} = 1$ and $w_{11} = w_{22} = 0$} in equation~\eqref{eq: sbm}, the graphon $W$ is a  bipartite graphon. When 
{$w_{21} = w_{12} = 0$ and $w_{11} = w_{22} = 1$}, the graphon $W$ is a community-structure graphon with two weakly-connected subgraphons. 

Seeking a GL minimizer for SBMs is different from seeking a constant-graphon GL minimizer because {we do not need a {mean-value constraint} 
in the SBM case. For the constant graphon, we need a {mean-value constraint} so that the minimizer is not trivial (i.e., all $1$ or all $-1$), but a bipartite graphon does not possess such a trivial minimizer. Trivial minimizers are prevented by the block structure that is encoded in the GL functional, rather than by an explicit constraint. Nontrivial block structures (e.g., ones that correpond to bipartite structure or community structure) incentivize a GL minimizer to distribute positive and negative values to different sets of nodes.


\subsubsection{Complete bipartite graphon} \label{sec: 2x2 bipartite}

In a bipartite graph (see Figure \ref{fig: 2x2}(b)), each node is in one of two sets, $S$ and $S^c$, with edges only between nodes in different sets. Consider the complete bipartite graph 
\begin{equation}\label{eq: bipartite graph}
    A^{(n)}_{ij} = \begin{cases}
        1 &\text{ if }\,\, i\in S\,, \, j\in S^c 
        \\
        1 &\text{ if } \,\, i\in S^c\,, \, j \in S
        \\
        0 &\text{ if } \,\, i\in S\, , \, j\in S
        \\
        0 &\text{ if } \,\, i \in S\,, \, j \in S^c \,. 
    \end{cases}
\end{equation}
Suppose that the graph has $n$ nodes.
Let {$a = {|S|}/{n}$ and $1 - a = {|S^c|}/{n}$,} so $a \in (0,1)$ is the proportion of nodes in {the} set $S$. The corresponding complete bipartite graphon is
\begin{equation}\label{eq: bipartite graphon}
	W(x,y) = 
		\begin{cases}
	0 &\text{ if } \,\, (x,y)\in (0,a) \times (0,a) \\
	1 &\text{ if  } \,\, (x,y)\in (0,a)\times [a,1) \\
	1 &\text{ if  } \,\, (x,y)\in [a,1) \times (0,a) \\
	0 &\text{ if  } \,\, (x,y)\in [a,1) \times [a,1)\,.
		\end{cases} 
\end{equation}

We rewrite the GL functional for the bipartite graph as 
\begin{align}\label{above}
    \GL^{W_n}_\ep(u) &= \frac{1}{n^2} \sum_{\substack{i\in S  , \, j\in S^c}} (u_i - u_j)^2 + \frac{1}{\ep n} \sum_{i=1}^n \Phi(u_i)   \\
     		&= \frac{1}{n^2} \sum_{\substack{i\in S , \, j\in S^c}} u_i^2
     			+ \frac{1}{n^2} \sum_{\substack{i\in S  , \, j\in S^c}} u_j^2 - \frac{2}{n^2} \sum_{i\in S , \, j\in S^c} u_j
    			  + \frac{1}{\ep n} \sum_{i=1}^n \Phi(u_i)  \notag  \\
           &= \frac{1}{n^2} \sum_{i\in S}  |S^c| u_i^2 + \frac{1}{n^2} \sum_{ j\in S^c} |S| u_j^2 - \frac{2}{n^2} \sum_{i\in S , \, j\in S^c} u_i u_j
     		 + \frac{1}{\ep n} \sum_{i=1}^n \Phi(u_i)   \notag \\
    &= \frac{1}{\ep n} \sum_{i\in S} \left( u_i^4 - (2 - \ep (1 - a)) u_i^2 + 1\right) 
    		+ \frac{1}{\ep n} \sum_{i\in S^c} \left( u_i^4 - (2- \ep a) u_i^2 + 1\right)
    		- \frac{2}{n^2} \sum_{i\in S} u_i \sum_{j\in S^c}u_j\,. \notag
\end{align}

By completing the square in the first two sums, we see that \eqref{above} is equal (up to a constant) to
\begin{equation}\label{eq: equivalent min problem bipartite}
    \frac{1}{\ep n} \sum_{i\in S} \left(u_i^2 - \left(1 - \frac{\ep(1-a)}{2}\right) \right)^2 + \frac{1}{\ep n} \sum_{i\in S^c} \left(u_i^2 - \left(1- \frac{\ep a}{2}\right) \right)^2 - \frac{2}{n^2} \sum_{i\in S} u_i \sum_{j\in S^c}u_j\,.
\end{equation}
Define $\alpha = \sum_{i\in S} u_i$, which we combine with the mean-value constraint \eqref{eq: graph mean-value constraint} and insert into the last term of \eqref{eq: equivalent min problem bipartite} to obtain
\begin{equation*} 
	{\sum_{i\in S} u_i \sum_{j\in S^c} u_j} 
    = \alpha(c - \alpha) \,.
\end{equation*} 
Note that we are treating the mean-value constraint \eqref{eq: graph mean-value constraint} as a shorthand notation for the mean value of $u_i$, rather than as a genuine constraint. 

We introduce the rescaling
\begin{equation}\label{eq: bipartite change of vars}
    v_i = \begin{cases}
        \frac{u_i}{\sqrt{1 - \ep(1 - a)/2}} &\text{ for } i\in S 
        \\
        \frac{u_i}{\sqrt{1 - \ep a/2}} &\text{ for } i\in S^c \,,
    \end{cases}
\end{equation}
which is similar to the rescaling \eqref{eq: change of vars}. To simplify notation, we define 
\begin{equation}  
    c_S := \sqrt{1 - \ep(1 - a)/2} \;\; \text{ and } \;\; c_{S^c} := \sqrt{1 - \ep a/2} \,.
\end{equation}
This yields the equivalent minimization problem 
\begin{equation}\label{eq: euivalent bipartite fnl}
    E_n(v) :=  \frac{c_S^4}{\ep} \sum_{i\in S} (v_i^2 - 1)^2 + \frac{c_{S^c}^4}{\ep} \sum_{i\in S^c} (v_i^2 - 1)^2 - \frac{2}{n} c_S c_{S^c}  \alpha(c - \alpha) \,.  
\end{equation}
With the change of variables \eqref{eq: bipartite change of vars}, we then obtain
\begin{equation}\label{eq: bipartite vol constraint}
    \alpha = c_S \sum_{i\in S} v_i \,, \quad  c - \alpha = c_{S^c} \sum_{i\in S^c} v_i \,.
\end{equation}
Treating $v$ and $\alpha$ as unknowns, the resulting {Euler--Lagrange} equations for minimizing $E_n$ are 
\begin{align*}
      \frac{4c_S^4}{\ep} (v_i^3 - v_i ) - \tau_{S}c_S  &= 0 \quad \text{ on } S \,,  \\
    \frac{4c_{S^c}^4}{\ep} (v_i^3 - v_i ) - \tau_{S^c}c_{S^c} &= 0 \quad \text{ on } S^c \,, \\
    -\frac{2}{n} c_S c_{S^c} \, \alpha + \tau_{S} - \tau_{S^c} &= 0 \,. \numberthis \label{eq: bipartite el}
\end{align*}
The first two equations in \eqref{eq: bipartite el} are for minimizing 
$v$ on $S$ and on $S^c$, respectively, and the third equation is for minimizing $\alpha$. There is no constraint equation.

\begin{proposition}[Characterization of the GL minimizers for the complete bipartite graphon]\label{seven-two}
    Let $W_n$ be the complete bipartite graph with $n$ nodes (see \eqref{eq: bipartite graph}). The minimizers of $\GL^{W_n}_\ep$ are functions $u$ that take the values $\{ \pm \sqrt{1 - \ep a/2} + O(1/\sqrt{n})\}$ except for at most a finite number {$m$} of nodes, where $m$ is independent of $n$. 
\end{proposition}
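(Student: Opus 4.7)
The plan is to follow the template of the proof of Proposition~\ref{prop: const gl minimizers}: reduce to the rescaled variational problem~\eqref{eq: equivalent bipartite fnl}--\eqref{eq: bipartite vol constraint}, construct a $\pm 1$-valued competitor to upper-bound the minimum energy by an $n$-independent constant, and exploit that bound together with the Euler--Lagrange system~\eqref{eq: bipartite el} to pin down the values of the minimizer near the roots of $x^3 - x = 0$.

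For the upper bound, I would take a competitor $\tilde v$ that equals $+1$ on a prescribed fraction of $S$ and $-1$ on the remainder, chosen so that the first constraint in~\eqref{eq: bipartite vol constraint} holds exactly after readjusting a single node on $S$, and similarly on $S^c$ for the second constraint. For such $\tilde v$ both quartic sums contribute only $O(1)$, coming from the two adjusted nodes, and the cross term $-\tfrac{2}{n}c_S c_{S^c}\,\alpha(c-\alpha)$ is bounded by a constant since $\alpha$ is determined by the test function; hence $E_n(v) \le E_n(\tilde v) \le C$ for any minimizer $v$.

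For the structural step, the $O(1)$ bound on $E_n(v)$ together with the $O(1)$ bound on the cross term forces both quartic sums $\sum_{i\in S}(v_i^2-1)^2$ and $\sum_{i\in S^c}(v_i^2-1)^2$ to be $O(1)$. By pigeonhole over sets of size $\Theta(n)$, there exist nodes $j_S \in S$ and $j_{S^c} \in S^c$ with $|v_{j_S}^2-1|, |v_{j_{S^c}}^2-1| = O(1/\sqrt n)$. Substituting these into the first two Euler--Lagrange equations in~\eqref{eq: bipartite el} and using $|v_{j_S}|, |v_{j_{S^c}}| = O(1)$ yields $|\tau_S|, |\tau_{S^c}| = O(1/\sqrt n)$. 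Hence every $v_i$ on $S$ (respectively $S^c$) satisfies $v_i^3 - v_i = O(1/\sqrt n)$, so by continuity of the roots of a cubic in its coefficients, each $v_i$ lies within $O(1/\sqrt n)$ of $\{0, \pm 1\}$. Undoing the rescaling~\eqref{eq: bipartite change of vars} produces values of $u_i$ within $O(1/\sqrt n)$ of $\{0, \pm c_S\}$ on $S$ and of $\{0, \pm c_{S^c}\}$ on $S^c$. Finally, any node with $v_i$ within $O(1/\sqrt n)$ of $0$ contributes approximately $1$ to the quartic sum on its side, so the $O(1)$ bound forces the number of such ``zero'' nodes to be at most a constant $N$ independent of $n$.

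The main obstacle is the coupling step: checking that the simultaneous bounds $\tau_S, \tau_{S^c}=O(1/\sqrt n)$ are consistent with the third equation in~\eqref{eq: bipartite el}, namely $\tau_S-\tau_{S^c} = \tfrac{2}{n}c_S c_{S^c}\alpha$, which amounts to a self-consistent determination of the mass parameter $\alpha^\ast$ at the minimizer. This coupling is precisely the mechanism that selects which of the clusters $\{\pm c_S, \pm c_{S^c}, 0\}$ are populated by a non-negligible fraction of the nodes, and it is what singles out the value $\pm\sqrt{1-\ep a/2}$ asserted in the statement (up to the $O(1/\sqrt n)$ correction).
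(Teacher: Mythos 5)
Your proposal follows essentially the same route as the paper's proof: an explicit near-binary competitor gives a uniform ($n$-independent) energy bound, pigeonhole on each of $S$ and $S^c$ combined with the first two Euler--Lagrange equations in \eqref{eq: bipartite el} yields $|\tau_S|,|\tau_{S^c}| = O(1/\sqrt{n})$, continuity of the roots of the cubic places each $v_i$ within $O(1/\sqrt{n})$ of $\{0,\pm 1\}$, and the energy bound caps the number of near-zero nodes by a constant independent of $n$. The only step worth making explicit is that, under the standing assumption $c=0$ for this section, the cross term equals $\tfrac{2}{n}c_S c_{S^c}\alpha^2 \geq 0$ at the minimizer, so the two quartic sums are genuinely controlled by $E_n(v)$ (this is what the paper uses in \eqref{Eq: Energy-min}); the ``coupling'' with the third Euler--Lagrange equation that you flag as the main obstacle is not actually needed for the characterization.
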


\begin{proof}

We use a similar argument as we did for the constant graphon to show that the values $v_i$ of the global minimizer are close to $\pm 1$ except for at most $m$ {nodes,} where $m$ is independent of $n$.

We show that the minimum value of $E_n$ is bounded above by $3/\ep$ for all $n$. To do this, we define
a function $\tilde{v}$ such that 
\begin{equation}\label{eq: uniform energy bound}
    E_n(\tilde{v}) \leq \frac{3}{\ep} \,.
\end{equation}
We need to consider different parities of $n$, $|S|$, and $|S^c|$.
We first suppose that $n$, $|S|$, and $|S^c|$ are all even.
In this case, $E_n(\tilde{v}) =  0$ if $\tilde{v}$ has the same number of $1$ values on $S$ as the number of $-1$ values on $S^c$. We thus obtain $\alpha = 0$. xNow suppose that $n$ is even but both $|S|$ and $|S^c|$ are odd, we set $\tilde{v}_i = 0$ for one node in $S$ and one node in $S^c$, and we {let $\tilde{v}$ attain the values $1$ and $-1$ on an equal number of entries} in $S$ {and} $S^c$. {We then again obtain} $\alpha = 0$. For both of these choices of $\tilde{v}$, we have 
\begin{equation}\label{Eq: E_n bound}
    E_n(\tilde{v}) \leq \frac{c_S^4 + c_{S^c}^4}{\ep} = \frac{2}{\ep} - 1 + \frac{\ep}{4}(1 - 2a + 2a^2) \leq \frac{3}{\ep} 
\end{equation}
for $a\in (0,1)$ and $\ep < 1$. 
{In} the final case, $n$ is odd, so one of $|S|$ and $|S^c|$ is even and the other is odd. In this case, one node has $\tilde{v}_i = 0$ and the remaining $\tilde{v}_i$ have an equal number of $1$ and $-1$ entries in $S$ and an equal number of $1$ and $-1$ entries in $S^c$. This again yields $\alpha = 0$, and $E_n(\tilde{v})$ is either {${c_S^4}/{\ep}$} (if $|S|$ is odd) or {${c_{S^c}^4}/{\ep}$} (if $|S^c|$ is odd), which again satisfies the uniform energy bound \eqref{eq: uniform energy bound}.

From the energy upper bound \eqref{eq: uniform energy bound}, a minimizer $v$ satisfies
\begin{equation}
\label{Eq: Energy-min}
     \frac{c_S^4}{\ep} \sum_{i\in S} (v_i^2 - 1)^2
    + \frac{c_{S^c}^4}{\ep} \sum_{i\in S^c} (v_i^2 - 1)^2 \leq E_n(v)\leq E_n(\tilde{v}) \leq \frac{3}{\ep} \,.
\end{equation}
Therefore, there exist nodes $j\in S$
and $j'\in S^c$ such that 
\begin{equation}
    (v_j^2 - 1)^2 \leq \frac{3}{|S|c^4_S} \quad \text{and} \quad (v_{j'}^2 - 1)^2 \leq \frac{3}{|S^c|c^4_S}\,.
\end{equation}

Because $|S| = an$ and $|S^c| = (1 - a)n$, we have 
\begin{equation*}
    |v_j^2 - 1| \leq \frac{m_1}{\sqrt{n}} \quad \text{ and } \quad {|v_{j'}^2 - 1|} \leq \frac{m_2}{\sqrt{n}} \,,
\end{equation*}
where the constants $m_1$ and $m_2$ do not depend on $n$. {The Euler--Lagrange} equations \eqref{eq: bipartite el} hold for all nodes, and they thus hold for nodes $j$ and $j'$. Therefore,
\begin{equation*}
    | \tau_S  | \leq \frac{C}{\sqrt{n}} \quad \text{ and }  \quad  |\tau_{S^c}| \leq \frac{C}{\sqrt{n}} 
\end{equation*}
for some constant $C$ that does not depend on $n$. With these bounds on $\tau_S$ and $\tau_{S^c}$, equations \eqref{eq: bipartite el} yield
\begin{align*}
    v_i^3 - v_i = O(1/\sqrt{n}) \,
\end{align*}
for all nodes in both $S$ and $S^c$. Similarly to the proof of Proposition \ref{prop: const gl minimizers}, for sufficiently large $n$, the zeros $v_i$ lie within $O(k/\sqrt{n})$ of $\pm 1$ and $0$. Furthermore, due to the energy bound \eqref{Eq: Energy-min}, we see that $v$ can be within $O(k/\sqrt{n})$ of $0$ for at most 
a finite number of nodes. {This number of nodes} does not exceed a constant that is independent of $n$.

From the change of variables \eqref{eq: bipartite change of vars}, the GL minimizer $u$ takes the values $u_i =  \pm \sqrt{1 - \ep a/2} \\ + O(1/\sqrt{n})$ for $i \in S^c$ and $u_i = \pm \sqrt{1 - \ep(1 - a)/2} + O(1/\sqrt{n}) $ for $i \in S$, except for at most a finite number of nodes $i$. 
\end{proof}

It follows from Proposition \ref{seven-two} that the limiting Young measures for the {complete bipartite} graphon GL minimizer are
\begin{align*}
    \nu_x &= \pmb{1}_{U_1}(x) \, \delta_{\sqrt{1 - \ep a/2}} + \pmb{1}_{S \setminus U_1}(x) \, \delta_{-\sqrt{1 - \ep a/2}}
    \\
    &\quad +  \pmb{1}_{U_2}(x) \, \delta_{\sqrt{1 - \ep(1 - a)/2}} + \pmb{1}_{S^c \setminus U_2}(x) \, \delta_{-\sqrt{1 - \ep(1 - a)/2}} \,,
\end{align*}
where $U_1 \subset S$ and $U_2 \subset S^c$ are any sets of sizes
$|U_1| = {|S|}/{2}$ and $|U_2| = {|S^c|}/{2}$.

As $n \ra \infty$, the energy bound \eqref{Eq: Energy-min} also yields a bound on the limiting graphon energy. {This bound on the limiting graphon energy shrinks as $n$ increases.} Because one loses the discrete character of graphs in the continuum limit, it no longer makes sense to discuss ``even" or ``odd" $|S|$ and $|S^c|$.


\subsubsection{Community-structure graphon}

Community-structure graphs consist of densely-connected subgraphs (i.e., communities) that are sparsely connected to each other. As in a $2 \times 2$ bipartite SBM, a $2 \times 2$ community-structure graph involves a partition $\{S,S^c\}$ of the set of nodes of a graph into two communities. Edges occur frequently between nodes in the same community (either $S$ or $S^c$), and they occur sparsely between nodes in different communities. We suppose that the communities are complete, and we refer to this example as the ``complete community-structure graph". The complete community-structure graph with communities $S$ and $S^c$ has adjacency-matrix elements 
\begin{equation}\label{eq: community graph}
    A^{(n)}_{ij} = \begin{cases}
        1 &\text{ if }\,\, i\,, \,j \in S
        \\
        1 &\text{ if } \,\,i\,, \,j \in S^c
        \\
        0 &\text{ if } \,\,i \in S\,,\,  j\in S^c
        \\
        0 &\text{ if } \,\,i \in S^c\,,\,  j \in S \,. 
    \end{cases}
\end{equation}
The corresponding complete community-structure graphon is
\begin{equation}\label{eq: community graphon}
	W(x,y) = 
		\begin{cases}
	1 &\text{ if } \,\, (x,y)\in (0,a) \times (0,a) \\
	0 &\text{ if  } \,\, (x,y)\in (0,a)\times [a,1) \\
	0 &\text{ if  } \,\, (x,y)\in [a,1) \times (0,a) \\
	1 &\text{ if  } \,\, (x,y)\in [a,1) \times [a,1)\,.
		\end{cases} 
\end{equation}

The GL functional for a complete community-structure graph is 
\begin{align*}
    \GL^{W_n}_\ep(u) &= \frac{1}{n^2} \sum_{\substack{i\in S , \, j\in S}} (u_i - u_j)^2 + \frac{1}{n^2} \sum_{\substack{i\in S^c , \, j\in S^c}} (u_i - u_j)^2 
    + \frac{1}{\ep n} \sum_{i = 1}^n \Phi(u_i) \,,
\end{align*}
which, with a similar computation to that for a complete bipartite graph, is equivalent up to a constant to the functional
\begin{equation}\label{eq: equivalent community fnl}
    E_n(v) =  \frac{c_S^4}{\ep} \sum_{i\in S} (v_i^2 - 1)^2 + \frac{c_{S^c}^4}{\ep} \sum_{i\in S^c} (v_i^2 - 1)^2 - \frac{2}{n} c_S c_{S^c} \alpha^2 - \frac{2}{n}c_S c_{S^c}(c - \alpha)^2 \,,
\end{equation}
where $\alpha$ and $c - \alpha$ are the same (see \eqref{eq: bipartite vol constraint}) as for a complete bipartite graphon, $c_S v_i = u_i$ for $i\in S$ and $c_{S^c} v_i = u_i$ for $i\in S^c$, and 
\begin{equation}\label{eq: communities scaling}
    c_S = \sqrt{1 - \ep a} \,, \quad  c_{S^c} = \sqrt{1 - \ep(1 - a)} \,. 
\end{equation}
As for a complete bipartite graphon, $\alpha$ does not act as a constraint. Instead, it provides a concise notation for the quantity $\alpha = \sum_{i\in S} u_i$.} Let $c = 0$. 
For $\alpha = \sum_{i\in S} u_i$, we have 
\begin{equation}\label{eq: vol constraint community}
    c_S \sum_{i\in S} v_i = \alpha \,, \quad c_{S^c} \sum_{i\in S^c} v_i = -\alpha \,.
\end{equation}

When $|S| = |S^c|$, the energy functional \eqref{eq: equivalent community fnl} reduces to 
\begin{equation}\label{eq: equivalent community fnl equal communities}
    E_n(v) =  \frac{2c_S^4}{\ep} \sum_{i = 1}^n (v_i^2 - 1)^2 - \frac{4}{n} c_S^2 \alpha^2 \,.
\end{equation}
The {first} term of the energy \eqref{eq: equivalent community fnl equal communities} encourages the values of $v$ to be near $\pm 1$. The {second} term of \eqref{eq: equivalent community fnl equal communities} encourages {$|\alpha|$} to be as large as possible. In other words, the second term encourages the values of $v_i$ within a community to either all be very large or all be very small. Therefore, there is a tradeoff between the first and the second terms.
The {mean-value constraint} \eqref{eq: vol constraint community} ensures that the sum of the values of $v$ in $S$ is the negative of the sum of the values of $v$ in $S^c$. We show {in Proposition \ref{prop: community structure minimizers}} that the optimal balance in this tradeoff has $v$ near $\pm 1$ and $\alpha = |S|$. This implies that $v_i = +1$ for all $i \in S$ and $v_i = -1$ for all $i \in S^c$.

In the following proposition, we characterize the GL minimizers {for} a complete community-structure graph when $|S| = |S^c|$. 
Let $|S| = |S^c|$ (which implies that $c_S = c_{S^c}$) {and $\alpha = \gamma |S|$, where we recall that $\gamma = \frac{c}{\sqrt{1 - \ep p}}$ (see Lemma \ref{lem: oversaturation const case}).}
The {mean-value constraint} \eqref{eq: vol constraint community} then entails that ${\gamma}/{c_S}$ is the mean value of $v$ on $S$ and that $-{\gamma}/{c_S}$ is the mean value of $v$ on $S^c$. Therefore, we can rewrite \eqref{eq: vol constraint community} as 
\begin{equation}\label{eq: equal community vol constraint}
    \frac{1}{|S|} \sum_{i\in S} v_i = \frac{\gamma}{c_S}\,, \quad  -\frac{1}{|S|} \sum_{i\in S^c} v_i = -\frac{\gamma}{c_S}\,.
\end{equation}

\begin{proposition}[Characterization of the GL minimizers for the complete community-structure graph] \label{prop: community structure minimizers}
Let $W_n$ be the complete community-structure graph, which has adjacency-matrix elements \eqref{eq: community graph}, and suppose that $|S| = |S^c|$. 
The minimizers of $\GL^{W_n}_\ep$ are functions $u$ that, on $S$, take a constant value that approaches $+1$ as $\ep \downarrow 0$ and, on $S^c$, take a constant value that approaches $-1$ as $\ep \downarrow 0$. Furthermore, the values of $\gamma$ that minimize $E_n$ {are equal to} $\pm c_S$, which approach $\pm 1$ as $\ep \ra 0$.
\end{proposition}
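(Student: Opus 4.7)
The plan is to reduce the $n$-variable minimization of $E_n$ in \eqref{eq: equivalent community fnl equal communities} to a one-dimensional optimization over the common mean $m = \gamma/c_S$ of $v$ on $S$ (equivalently $-m$ on $S^c$, by the volume constraint \eqref{eq: equal community vol constraint}). First I would fix $\gamma$ (hence $m$ and $\alpha=\gamma|S|$) so that the last term of \eqref{eq: equivalent community fnl equal communities} depends only on $m$, and then solve the inner subproblem of minimizing $\sum_{i\in S}(v_i^2-1)^2$ subject to $\sum_{i\in S}v_i = m|S|$, together with the analogous problem on $S^c$ with mean $-m$.

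For the inner subproblem I would recycle the convex-hull argument used in Lemma \ref{eq: oversaturation const case}. Let $h$ be the convex hull of $\Phi(s)=(s^2-1)^2$, so $h\equiv 0$ on $[-1,1]$ and $h=\Phi$ elsewhere. Jensen's inequality gives $\sum_{i\in S}h(v_i)\ge |S|h(m)$; for $|m|\ge 1$, strict convexity of $h$ at $m$ forces equality only at the constant sequence $v_i\equiv m$, where moreover $h(v_i)=\Phi(v_i)$, so the inner minimum equals $|S|(m^2-1)^2$ and is attained \emph{only} by the constant vector. For $|m|<1$ the infimum is $0$, attained up to an additive $O(1)$ error by taking $v_i\in\{\pm 1\}$ in proportions $(1\pm m)/2$ and absorbing any integer-rounding discrepancy into a single ``defect'' entry $v^*\in[-1,1]$ with $\Phi(v^*)\le 1$.

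Substituting back produces an effective one-variable energy
\begin{equation*}
    F_n(m) \;=\; \begin{cases} -n c_S^4 m^2 + r_n, & |m|\le 1, \\[4pt] \dfrac{2c_S^4 n}{\ep}(m^2-1)^2 \,-\, n c_S^4 m^2, & |m|\ge 1, \end{cases}
\end{equation*}
where $0\le r_n\le 4/\ep$. On the branch $|m|\ge 1$, solving $F_n'(m)=0$ gives the critical points $m^2 = 1+\ep/4$, at which $F_n = -nc_S^4(1+\ep/8)$; on $|m|\le 1$ one has $F_n\ge -nc_S^4$. Since $-nc_S^4(1+\ep/8) < -nc_S^4$ for every $\ep>0$, the global minimum is achieved on the oversaturated branch, the inner rigidity forces $v$ to be constant on $S$ with value $\pm\sqrt{1+\ep/4}$ and constant on $S^c$ with value $\mp\sqrt{1+\ep/4}$, and unwinding the change of variables \eqref{eq: bipartite change of vars} with $c_S=\sqrt{1-\ep/2}$ yields $u_i = \pm\sqrt{(1-\ep/2)(1+\ep/4)}\to \pm 1$ and $\gamma = mc_S = \pm c_S\sqrt{1+\ep/4}\to \pm c_S$ as $\ep\downarrow 0$, as claimed.

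The step I expect to be the main obstacle is the inner rigidity for $|m|\ge 1$: one must argue that equality in the Jensen bound $\sum h(v_i)\ge |S|h(m)$ forces $v$ to be constant, using strict convexity of $h$ at $m>1$ together with the observation that any mass placed on the flat region $(-1,1)$ can only decrease the constrained mean and so is incompatible with $\bar v\ge 1$. Once this rigidity is in hand, the one-variable optimization and the $\ep\downarrow 0$ asymptotics are routine. A minor subtlety worth flagging is non-uniqueness under the global sign flip $v\mapsto -v$, which preserves both the volume constraint ($c=0$) and the energy—consistent with the proposition, which asserts only that the constant values on $S$ and $S^c$ are of opposite sign.
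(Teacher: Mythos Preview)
Your proposal is correct and follows essentially the same strategy as the paper's own proof: both use the convex-hull lower bound $h\le\Phi$ to solve the inner problem at fixed mean, split into the cases $|m|\le 1$ and $|m|\ge 1$, show rigidity (constant $v$) on the oversaturated branch, and then optimize the resulting one-variable energy in $m$ to compare the two branches. Your packaging into an explicit effective energy $F_n(m)$ is a bit tidier than the paper's case-by-case argument; note that you obtain the critical value $m^2=1+\ep/4$ whereas the paper arrives at $1+\ep/2$ because of a dropped factor of $2$ in its display \eqref{eq: g_1}, but the qualitative conclusion and the $\ep\downarrow 0$ limits are unaffected.
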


\begin{proof}

We separately consider the two cases $\left|{\gamma}/{c_S}\right| > 1$ and $\left|{\gamma}/{c_S}\right| \leq 1$. 

First suppose that $\left|\frac{\gamma}{c_S}\right| > 1$. 
Let $h$ be the convex hull of the double-well potential $f(s) = (s^2 - 1)^2$. Therefore, $h = 0$ on $[-1,1]$ and $h = f$ everywhere else. Define 
\begin{equation*}
    H_n(v) := \frac{2 c_S^4}{\ep} \sum_{i = 1}^n h(v_i) - \frac{4}{n} c_S^2 \alpha^2\,,
\end{equation*}
which is a lower bound of the energy $E_n$ for all $v$.
Because $h$ is convex, $(h(x) + h(y))/2 \leq h\left((x + y)/2\right)$. Therefore, if $v_i \neq v_j$ for any $i \neq j$, the function $H_n$ decreases if one replaces both $v_i$ and $v_j$ by their mean $(v_i + v_j)/2$.
Consequently, the minimizer $v'$ of $H_n$ is constant on $S$ and constant on $S^c$. However, because of the {mean-value constraint} \eqref{eq: vol constraint community}, $v'$ has different values on $S$ and $S^c$.
The minimizer that satisfies the {mean-value constraint} \eqref{eq: equal community vol constraint} is the function $v'$ with
\begin{equation}\label{eq: v'}
    v'_i = \begin{cases}
        +\frac{\gamma}{c_S} &\text{ if } \,\, i\in S 
        \\
        -\frac{\gamma}{c_S} &\text{ if } \,\, i\in S^c   \,.  
    \end{cases}
\end{equation}

Because $\left|{\gamma}/{c_S}\right| > 1$, we have that $H_n(v') = E_n(v')$. Let $\gamma' = \gamma^2$ and consider the energy
\begin{equation}\label{eq: g_1}
    E_n(v') = \frac{n c_S^4}{\ep} \left(\frac{ \gamma'}{c_S^2} - 1\right)^2 - \frac{4}{n} c_S^2 \gamma' |S|^2
\end{equation}
as a function of $\gamma'$. We denote this function by $g_1(\gamma')$.
Its derivative is
\begin{align}\label{eq: g_1 derivative}
    \frac{d}{d\gamma'}g_1(\gamma') &=  \frac{2c_S^2 n}{\ep} \left(\frac{\gamma'}{c_S^2} - 1\right) - \frac{4}{n} c_S^2 |S|^2\,.
\end{align}
Setting {the right-hand side of} \eqref{eq: g_1 derivative} to $0$ yields the critical point
\begin{equation*}
    \gamma' = c_S^2 \left(1 + \ep a \right) \,.
\end{equation*} 
Therefore, when $|S| = |S^c|$, the {critical values of $\gamma$ are}
\begin{equation*}
    \gamma \in \left \{\pm c_S \sqrt{1 + \ep/2}\right\} = \left\{\pm \sqrt{1 - (\ep/2)^2}\right\}\,. 
\end{equation*} 
The second derivative $\frac{d^2}{d\gamma'^2}g_1(\gamma') = \frac{4|S|}{\ep} > 0$ everywhere, so the critical points are minima of $g_1$. The minimizer \eqref{eq: v'} of $H_n$ is thus
\begin{equation}
    v'_i = \begin{cases}
        +\sqrt{1+\ep/2} &\text{ if }\,\, i\in S 
        \\
        -\sqrt{1+\ep/2} &\text{ if \,\,} i\in S^c   \,.  
    \end{cases}
\end{equation}
Using the change of variables \eqref{eq: communities scaling}, we obtain the GL minimizer $u$, which is defined by 
\begin{equation}
    u_i = \begin{cases}
        +\sqrt{1 - (\ep/2)^2} &\text{ if }\,\, i\in S 
        \\
        -\sqrt{1 - (\ep/2)^2} &\text{ if }\,\, i\in S^c   \,. 
    \end{cases}
\end{equation}
Because $\ep \downarrow 0$, we have that $+\sqrt{1 - (\ep/2)^2} \, \downarrow \, 1$ and $-\sqrt{1 - (\ep/2)^2} \, \uparrow \, -1$. {That is, the positive values of $u$ converge downward to $1$ and the negative values {of $u$} converge upward to $-1$.} Therefore, even when the mean value of $v$ is forced by the {mean-value constraint} \eqref{eq: equal community vol constraint} to have an absolute value larger than $1$, the optimal $u$ approaches $\pm 1$.

Now suppose that $\left|{\gamma}/{c_S}\right| \leq 1$. We construct a candidate minimizer $\tilde{v}$ that satisfies $\gamma = \pm c_S$, and we show that the energy is larger for {$|\gamma/c_S| < 1$ than for $|\gamma/c_S| = 1$.} 
Let $\tilde{v}_i = +1$ for all nodes $i \in S$, and let $\tilde{v}_i = -1$ for all nodes $i \in S^c$. This yields $\gamma = c_S$ and an energy of
\begin{equation} \label{eq: energy bound community}
    E_n(\tilde{v}) = - \frac{4}{n} c_S^2 |S|^2 \,.
\end{equation}
Swapping the signs of $\tilde{v}$ for $S$ and $S^c$ yields $\gamma = -c_S$ and the {same energy \eqref{eq: energy bound community}}. 

Now consider $v$ with $\gamma \in (0, c_S)$, which implies that $|\gamma/c_S| < 1$. {In this case, the energy \eqref{eq: equivalent community fnl} is}
\begin{equation}\label{unsaturated energy}
    E_n(v) =  \frac{2c_S^4}{\ep} \sum_{i = 1}^n (v_i^2 - 1)^2  - \frac{4}{n} c_S^2 \gamma^2 |S|^2 \,.
\end{equation} 
The second term of \eqref{unsaturated energy} is larger than $E_n(\tilde{v})$ and the first term of \eqref{unsaturated energy} is nonnegative, so $E_n(v) \geq E_n(\tilde{v})$. 
We can use the same argument for the energy $E_n(v)$ for $\gamma \in (-c_S, 0)$. 

We conclude that ${\gamma}/{c_S} = \pm 1$ is the optimal mean value of $v$ on $S$ and $S^c$ when $\left|{\gamma}/{c_S}\right| \leq 1$. Furthermore, the candidate minimizer $\tilde{v}$ is {a} minimizer for this value of $\gamma$. 
The GL minimizer $u$ {that corresponds to this candidate minimizer $\tilde{v}$} has components
\begin{equation}
    u_i = \begin{cases}
        +\frac{1}{\sqrt{1 - \ep/2}} &\text{ if }\,\, i\in S 
        \\
        -\frac{1}{\sqrt{1 - \ep/2}} &\text{ if } \,\, i\in S^c \,. 
    \end{cases}
\end{equation}
{As $\ep \downarrow 0$, the values of the {GL} minimizer $u$ approach $\pm 1$.}

\end{proof}


\section{Conclusions and discussion} \label{sec: conclusion}

We studied large-graph limits of Ginzburg--Landau (GL) functionals by taking a functional-analytic view of graphs as nonlocal kernels. We defined graphon GL and {total-variation (TV)} functionals, and we showed that their minimizers are consistent with minimizers of {associated} graph GL and TV functionals. Given a sequence $\{W_n\}$ of graphs that converges in cut norm to a limiting graphon $W$ (i.e., $W_n \sqra W$), the sequence of graph GL functionals~\eqref{eq: graph GL} $\Gamma$-converges to the graphon GL functional~\eqref{eq: graphon GL}, implying that the sequence of minimizers of the GL functionals that correspond to the graphs $W_n$ converges to a minimizer of the GL functional that corresponds to the graphon $W$ as $W_n \sqra W$. We also proved that the sequence of graph TV functionals~\eqref{eq: graph TV} 
$\Gamma$-converges to the graphon TV functional~\eqref{eq: graphon TV} {as $W_n \sqra W$}. Additionally, we showed that the graphon GL and TV functionals satisfy the classical $\Gamma$-limits $\GL_\ep \gamra \TV$ and $\GL^{W_n}_\ep \gamra \TV^{W_n}$ as $\ep \ra 0$. These $\Gamma$-convergence results (see our summary in Figure~\ref{figure:convergences}), in concert with compactness properties, imply that the minimizers of the $\Gamma$-converging functionals also converge. 

The limiting functionals highlight several fundamental differences between the graphon GL functional and both the graph GL functional and the classical GL functional. One difference is that the graphon functionals are formulated using Young measures, rather than using functions. This difference highlights the fact that the limiting minimizers are Young measures, which can have arbitrary amounts of oscillation, that take the same values in the same proportions as the minimizing functions. Another difference, which is highlighted by our limit (4) (see Section \ref{sec: limit (4)} for the proof of this limit), is that the $\ep$-scaling of the graphon GL functional is somewhere between the $\ep$-scalings of the classical GL functional and the graph GL functional. 
The classical GL functional has scalings $\ep$ and $1/\ep$ for the Dirichlet energy and double-well potential, respectively. By contrast, the graph GL functional has the scalings $1$ and $1/\ep$, respectively. The graphon GL functional has the same $\ep$-scalings as the graph GL functional when the graphon $W$ is bounded. We did not determine a scaling for the more general $W \in L^p((0,1)^2)$, but this is worth examining in future efforts.

Our limit (3), which we proved in Section \ref{sec: limit (3)}, extends results of Braides et al.~\cite[Lemma 11, Theorem 12]{braides2020cut} in two key ways. First, Braides et al. proved $\Gamma$-convergence of the graph-cut functional (which acts on finite-range functions) as $n \ra \infty$, whereas we proved $\Gamma$-convergence of the graph Dirichlet energy, which extends the graph-cut functional to act on $L^\infty$ functions.
Second, Braides et al. worked with dense sequences of graphs that converge to $L^\infty$ graphons, whereas we proved $\Gamma$-convergence for more general sequences of graphs that converge to $L^p$ graphons. 

In the present paper, we have been concerned with theoretical ideas, but one can also examine applications of graphon GL functionals that extend existing applications of graph GL functionals. In particular, graph GL and TV functionals have been employed in image processing~\cite{caffarelli2009nonlocal, elbouchairi2023nonlocal, gilboa2009nonlocal, liu2014new, lou2010image, arias2012nonlocal, aujol2015fundamentals, el2014mean, hafiene2018nonlocal}, and it is worthwhile to pursue analogous applications of graphon GL and TV functionals. One benefit of graphon theory is that graphons provide a way to compare different networks that have distinct sizes and densities \cite{wolfe2013nonparametric}. Furthermore, because graphons are a familiar object in nonlocal analysis, one can use computational techniques from nonlocal analysis (e.g., see \cite{du2019nonlocal}) for applications that involve large graphs.

Another question for future research is how to apply our results in practical computations. For example, it is important to investigate convergence rates. For approximately what graph sizes $n$ are the minimizers of a graphon GL functional close enough to the minimizers of an associated graph GL functional? Furthermore, minimizing a graph GL functional requires approximation algorithms~\cite{budd2019mbo, merkurjev2013mbo, bertozzi2016diffuse}, and similar algorithms may be useful for minimizing a graphon GL functional. Seeking graphon GL minimizers will also involve seeking Young-measure minimizers, and this in turn will require numerical approximations~\cite{nicolaides1993computation, carstensen2000numerical}. 


\section*{Acknowledgements}

Our research was supported in part by the National Science Foundation through grants DMS-2309245 (QD), DMS-1937254 (QD, JS, and EJZ), DMS-1916439 (which is a grant to the American Mathematical Society), and DGE-2036197 (EJZ). Some of the research was {conducted} as part of EJZ's visits to UCLA. We thank Andrew Blumberg for useful discussions and two anonymous referees for helpful comments. MAP thanks Leonid Bunimovich for his mentorship when he was a postdoc at Georgia Tech and is honored by the invitation to contribute to this Festschrift.





\end{document}